\newcommand{\rp}{\mathbb{R}}
\newcommand{\cp}{\mathbb{C}}
\newcommand{\pp}{\mathbb{P}}
\newcommand{\np}{\mathbb{N}}
\newcommand{\zp}{\mathbb{Z}}
\newcommand{\kp}{\mathbb{K}}
\newcommand{\wt}{\widetilde}
\DeclareMathOperator{\coef}{coeff}
\DeclareMathOperator{\Spec}{Spec}
\DeclareMathOperator{\res}{res}
\DeclareMathOperator{\Aut}{Aut}
\DeclareMathOperator{\Card}{Card}
\DeclareMathOperator{\Vol}{Vol}
\DeclareMathOperator{\Div}{div}
\DeclareMathOperator{\Tr}{Tr}
\DeclareMathOperator{\Conv}{Conv}
\DeclareMathOperator{\Divisor}{Div}
\DeclareMathOperator{\interieur}{int}
\DeclareMathOperator{\B}{B}
\DeclareMathOperator{\R}{R}
\newtheorem{lemm}{Lemma}
\newtheorem{coro}{Corollary}
\newtheorem{prop}{Proposition}
\newtheorem{theo}{Theorem}
\newtheorem{rema}{Remark}
\author{Martin Weimann}
\address{Departament Algebra i Geometria, Facultat de Matem\`{a}tiques, Universitat Barcelona
Gran via 585, 08007 Barcelona.}
\email{weimann23@gmail.com}
\title[A lifting and recombination algorithm...]{A lifting and recombination algorithm for rational factorization of sparse polynomials}
\begin{document}


\begin{abstract}
We propose a new lifting and recombination scheme for rational bivariate polynomial factorization that takes advantage of the Newton polytope geometry. We obtain a deterministic algorithm that can be seen as a sparse version of an algorithm of Lecerf, with now a polynomial complexity in the volume of the Newton polytope. We adopt a geometrical point of view, the main tool being derived from some algebraic osculation criterions in toric varieties.
\end{abstract}

\maketitle


\section{Introduction and main results}
This article is devoted to develop an algorithm for factoring a bivariate polynomial $f$ over a number field $\kp$ by taking advantage of the geometry of its Newton polytope. Geometrically, this corresponds to decomposing the curve defined by $f$ in a suitable toric surface $X$. We will thus talk about a toric factorization algorithm. The usual case of dense polynomials corresponds to the classical projective completion $X=\pp^2$ of the complex plane. Our approach is based on algebraic osculation. The central idea is that we can recover the decomposition of the curve $C\subset X$ defined by $f$ from its restriction to a suitable toric Cartier divisor $D$. In a previous work \cite{W:gnus}, we developed a similar method based on vanishing-sums criterions and obtained an exponential complexity toric factorization algorithm. In contrast, we use here a lifting and recombination model based on a vector space basis computation which conduces to a polynomial complexity algorithm.  Our method can be regarded as a toric version of the algorithms  developed by Lecerf \cite{L:gnus}, \cite{L2:gnus} and by Ch\`eze and Lecerf \cite{CL:gnus} for dense polynomials. 
Let us expose our main results.
\vskip2mm
\noindent
{\bf Main results.} Let $\kp$  be a number field and let $f\in\kp[t_1,t_2]$ be a bivariate polynomial. Suppose that $f$ has monomial expansion
$$
f(t)=\sum_{m\in \np^2} c_m t^m,
$$
where $m=(m_1,m_2)$ and $t^m=t_1^{m_1}t_2^{m_2}$. The Newton polytope $N_f$ of $f$ is the convex hull of the exponents $m$ for which $c_m$ is not zero. An exterior facet $F$ of $N_f$ is a one-dimensional face whose primitive inward normal vector has at least one negative coordinate. The associated facet polynomial of $f$ is the univariate polynomial obtained from $f_F=\sum_{m\in F} c_m t^m$ after a suitable monomial change of coordinates. In all of the sequel, we assume the following hypothesis 
\vskip0mm
\begin{eqnarray*}
 & (H_1) & \textit{The polytope $N_f$ contains the elementary simplex of $\rp^2$.}\\
& (H_2) &  \textit{The exterior facet polynomials of $f$ are squarefree.}
\end{eqnarray*}
\vskip2mm
\noindent
We denote by $\omega$ the matrix multiplication exponent. It's well known \cite{GG:gnus} that $2<\omega<2.37$. In all of the sequel, rational factorization means irreducible factorization over $\kp$. Our main result is the following
\vskip2mm
\noindent
\begin{theo} There is a deterministic algorithm that, given $f\in \kp[t_1,t_2]$ which satisfies $(H_1)$ and $(H_2)$, and given the rational factorization of the exterior facet polynomials of $f$, computes the rational factorization of $f$ with 
$\mathcal{O}(\Vol(N_f)^{\omega})$ 
arithmetic operations in $\kp$.
\end{theo}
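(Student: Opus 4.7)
The plan is to follow the classical lift-and-recombine template of \cite{L:gnus,CL:gnus}, replacing each dense ingredient by its toric analogue. Let $X$ be the toric surface attached to the polytope $N_f$, let $C\subset X$ be the curve cut out by $f$, and let $D\subset X$ be the union of toric Cartier divisors corresponding to the exterior facets of $N_f$. Hypothesis $(H_1)$ ensures that $X$ contains the affine plane as its open dense torus and that $C$ is in sufficiently general position with respect to $D$; hypothesis $(H_2)$ ensures that $C\cap D$ is reduced. Combined with the input data, this gives a full coprime decomposition of $C\cap D$ defined over $\kp$.

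\textbf{Lifting along $D$.} Starting from the decomposition of $C\cap D$, I would lift it to a decomposition modulo the $k$-th infinitesimal neighborhood $kD$ of $D$ in $X$, for a precision $k$ large enough that the osculation criterion below is sharp. This is carried out by a toric Hensel-type iteration: at each order $j$, the coprimality of the mod-$D$ factors splits the extension problem uniquely, and the correction at that order is obtained by solving a linear system whose unknowns range over lattice points in a scaled copy of $N_f$. Using fast linear algebra, the total cost of the lift fits in $\mathcal{O}(\Vol(N_f)^{\omega})$ arithmetic operations in $\kp$.

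\textbf{Recombination by osculation.} The key step, and the place where the sparse complexity is gained, replaces the classical trace/logarithmic derivative recombination by an algebraic osculation criterion on $X$. The criterion should state that a formal product of some of the lifted local factors along $kD$ extends to a genuine curve in $X$ with a prescribed sub-polytope of $N_f$ if and only if a certain obstruction class in an explicit $\kp$-vector space of dimension $\mathcal{O}(\Vol(N_f))$ vanishes. This yields a $\kp$-linear map $\Phi$ between spaces of dimension $\mathcal{O}(\Vol(N_f))$ whose kernel is in natural bijection with the subsets of lifted factors that recombine into genuine $\kp$-irreducible factors of $f$. Computing a basis of $\ker\Phi$ by Gaussian elimination takes $\mathcal{O}(\Vol(N_f)^{\omega})$ operations, after which each irreducible factor can be reconstructed from its lifted data within the same budget.

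\textbf{Main obstacle.} The routine ingredients are the toric Hensel iteration and the linear algebra; the substantive difficulties lie in (i) choosing the precision $k$ of the lifting just large enough for the osculation criterion to determine the recombinations uniquely, and (ii) showing that the source and target of $\Phi$ can be arranged to have dimension $\mathcal{O}(\Vol(N_f))$ rather than the naive $\mathcal{O}((\deg f)^2)$. Both points rely on sharp quantitative forms of the algebraic osculation results from \cite{W:gnus}, repackaged as explicit linear-algebraic data attached to sub-polytopes of $N_f$. Once such a sharp criterion is in hand, the claimed complexity follows by summing the costs of the lifting and the kernel computation, both of which are $\mathcal{O}(\Vol(N_f)^{\omega})$.
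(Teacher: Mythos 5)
Your high-level algorithmic plan (lift along an infinitesimal neighbourhood of the boundary divisor, encode the obstruction to extending a lifted divisor to $X$ as a $\kp$-linear map $\Phi$, compute $\ker\Phi$, then reconstruct factors) does match the architecture of the paper: the paper's $\Phi$ is the osculation pairing from \cite{W:gnus}, its matrix is the $A=(a_{Pm})$ of Theorem~3/Proposition~1, and $V(D)=\ker A$. However, there is a genuine gap at the heart of your argument.

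You attribute the sufficiency of the lifting precision to ``sharp quantitative forms of the algebraic osculation results from \cite{W:gnus}.'' This is not where that fact comes from, and no amount of sharpening the osculation criterion alone would deliver it. The osculation criterion only characterizes which $\gamma\in\Divisor(D)$ extend to Cartier divisors on $X$; it says nothing, by itself, about whether the space $V(D)$ of extendable $\kp$-combinations of the $\gamma_P$'s is spanned \emph{exactly} by the restrictions $\gamma_1,\dots,\gamma_s$ of the irreducible rational components of $C$. That identification is the content of the paper's Theorem~2, and it is proved by a completely different mechanism: one associates to each $\gamma\in V_\zp(D)$ a rational $1$-form $\omega$ on $X$ with controlled poles, shows $\omega$ is closed precisely because $D\ge 2\Div_\infty(f)$ (Lemmas~1--5), invokes Ruppert's theorem to write $\omega$ as a $\cp$-combination of logarithmic derivatives of the absolute factors (Lemma~6), reads off the coefficients by a residue computation (Lemma~7), and finally descends to $\kp$ by a Galois argument (Lemma~8). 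Indeed, \cite{W:gnus} itself works with the strictly smaller precision $\Div_\infty(f)+\partial X$, where $V(D)$ is in general too large, forcing a search over $\{0,1\}^{\mathcal P}$ with exponential worst-case cost (cf.\ Proposition~5 and Subsection~4.2); this is exactly the obstruction your proposal does not overcome. Without some proof of the precision bound, ``kernel in bijection with the subsets that recombine'' is an unsupported assertion and the algorithm may output spurious factors.

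Two smaller issues. First, your statement of the recombination is phrased as if $\ker\Phi$ were a set of subsets; what is actually used is that the \emph{reduced echelon basis} of the kernel consists of the pairwise-orthogonal $\{0,1\}$-vectors $\gamma_j$ (this requires $(H_2)$ and Theorem~2). Second, you dismiss factor reconstruction as being ``within the same budget,'' but in the paper this is the dominant step: Propositions~2 and~3 show the factors are recovered by computing Newton polytopes via toric intersection theory and then solving affine systems of size $\mathcal{O}(\Vol(N_q))$, for a total $\mathcal{O}(\Vol(N_f)^\omega)$; this is genuinely more delicate in the sparse setting than in the dense one, where modular multiplication or partial-fraction methods apply.
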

\vskip2mm
\noindent

In some cases, our complexity improves that of the fastest actual algorithms which would treat $f$ as a dense polynomial.  For instance, if $N_f$ is the convex hull of $\{(0,0),(a,0),(0,b),(n,n)\}$ for some fixed small integers $a, b$, we obtain a complexity $\mathcal{O}(n^{\omega})$  while the fast algorithm of Lecerf \cite{L2:gnus} would have complexity $\mathcal{O}(n^{\omega+1})$.  In any case, the degree sum of the exterior facet polynomials of $f$ is  smaller than the total degree of $f$ so that the unavoidable univariate factorization step is faster using the toric approach. The gain might be considerable : If $a=b=2$ in the previous example, we use two univariate factorizations in degree $2$ instead of one univariate factorization in degree $2n$. 
\vskip2mm
\noindent

Let us explain the main tools for proving Theorem $1$. By the hypothesis $(H_1)$, we can consider a complete \textit{regular} fan $\Sigma$ that refines the normal fan $\Sigma_f$ of $N_f$ and that contains the regular $2$-dimensional cone generated by the canonical basis of $\rp^2$. Such a fan determines a smooth complete toric surface $X=X_{\Sigma}$ and a torus-equivariant embedding of the affine plane $\cp^2=\Spec \cp[t_1,t_2]$ into $X$. The rational factorization of $f$ correspond to the decomposition over $\kp$ of the Zariski closure $C\subset X$ of the affine curve defined by $f$. 
The geometry of $N_f$ is related to the intersection of $C$ with the boundary divisor
$$
\partial X:=X\setminus \cp^2
$$
of the toric completion $X$, and we want to use this information. 
\vskip2mm
\noindent

Let $\Divisor(X)$ be the group of Cartier divisors of $X$. We definitively fix $D\in \Divisor(X)$ effective with support $|D|=|\partial X|$. For convenience, we identify $D$ with the induced subscheme $(|D|,\mathcal{O}_D)$ of $X$ and we denote by $\Divisor(D)$ the group of Cartier divisors of $D$. The inclusion morphism
$
i:D\rightarrow X
$
induces a restriction map $i^*$ on the subgroup of divisors of $X$ who intersects $D$ properly. In particular, we can consider the restriction 
$$
\gamma_C:=i^*(C)\in\Divisor(D)
$$
of $C$ to $D$. The main idea is that for $D$ chosen with sufficiently big multiplicities, we can recover both the rational and the absolute (over $\bar{\kp}$) factorization of $f$ from $\gamma_C$. 
\vskip2mm
\noindent

The irreducible decomposition of $C\cap \partial X$ over $\kp$ is indexed by the set $\mathcal{P}$ of the monic irreducible rational factors of all of the exterior facet polynomials of $f$ and we decompose $\gamma_C$ accordingly as  
$$
\gamma_C=\sum_{P\in\mathcal{P}} \gamma_P,
$$
where $\gamma_P$ corresponds to lifting $P$ to a local factor of $f$ modulo a local equation of $D$ (see Subsection $3.1$). The recombination problem consists in computing the partition of $\mathcal{P}$ that corresponds to the rational decomposition of $C$. To this aim, we introduce the free $\zp$-module
$$
V_{\zp}:=\Big\{\sum_{P\in\mathcal{P}} \mu_P\gamma_P, \,\,\mu_P\in \zp\Big\}\subset \Divisor(D),
$$
and the submodule
$$
V_{\zp}(D):=\{\gamma\in V_{\zp}; \,\,\exists \, E\in \Divisor(X), \,\, i^*(E)=\gamma\}
$$
of divisors of $D$ that extend to $X$. We set $V:=V_{\zp}\otimes\kp$ and $V(D):=V_{\zp}(D)\otimes\kp$. By construction, the irreducible rational decomposition 
$
C=C_1\cup \cdots \cup C_s
$
of $C$ generates a vector subspace 
$$
\langle \gamma_1,\ldots,\gamma_s\rangle \subset V(D)
$$
where $\gamma_j:=i^*(C_j)$. By the hypothesis $(H_2)$, the $\gamma_j$'s are pairwise orthogonal in the basis $(\gamma_P)_{P\in \mathcal{P}}$ of $V$ and so $\dim V(D)\ge s$. The following theorem asserts that equality holds for $D$ big enough. We say that a basis $(\nu_1,\ldots,\nu_n)$ of $V(D)\subset V$ is a \emph{reduced echelon basis} of $V(D)$ if the matrix with $j^{th}$ row $\nu_j$ is in its reduced echelon form in the canonical basis of the input space $V$ (see \cite{Sto:gnus}). Such a basis exists and is unique. We obtain the following
\vskip3mm
\noindent
\begin{theo}
Let $\Div_{\infty}(f)$ be the polar divisor of the rational function of $X$ induced by $f$. If the inequality
$$
D\ge 2\Div_{\infty}(f)
$$
holds, then $(\gamma_1,\ldots,\gamma_s)$ is the reduced echelon basis of $V(D)$. 
\end{theo}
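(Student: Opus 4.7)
The plan is to show $V(D) = \langle \gamma_1, \ldots, \gamma_s\rangle$; the ``reduced echelon'' part of the conclusion is then automatic. Indeed, by $(H_2)$ the subsets $\mathcal{P}_j \subset \mathcal{P}$ indexing the irreducible rational factors of the facet polynomials of each $C_j$ form a partition of $\mathcal{P}$, so $\gamma_j = \sum_{P \in \mathcal{P}_j} \gamma_P$ has $\{0,1\}$-coefficients, and the family $(\gamma_j)$ has pairwise disjoint supports. After fixing any ordering of $\mathcal{P}$ that puts one chosen pivot per block first, this family is already in reduced row echelon form, and by uniqueness must then be \emph{the} reduced echelon basis of its span. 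Since $\langle \gamma_1,\ldots,\gamma_s\rangle \subset V(D)$ is already established in the text, it remains only to show the reverse inequality $\dim_\kp V(D) \le s$, i.e.\ any $\gamma = \sum_P \mu_P \gamma_P \in V(D)$ should have $\mu_P$ depending only on the block index $j$ with $P \in \mathcal{P}_j$.

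To this end, pick $E \in \Divisor(X)$ with $i^*(E) = \gamma$ (both intersecting $D$ properly), and decompose $E = \sum_j a_j C_j + E'$, where $a_j \in \zp$ and $E'$ shares no irreducible component with $C$. Applying $i^*$ yields $\gamma = \sum_j a_j \gamma_j + i^*(E')$, so the problem reduces to showing $i^*(E') = 0$. The heart of the proof is therefore the following osculation statement: \emph{under the hypothesis $D \ge 2\Div_\infty(f)$, no Cartier divisor $E'$ on $X$ that shares no component with $C$ can restrict to a nonzero element of $V$.}

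For this I would invoke the algebraic osculation criterions of \cite{W:gnus}, which characterize the image of $i^*$ via a cohomological lifting obstruction in $H^1(X, \mathcal{O}_X(L - D))$ for a line bundle $L$ determined by the $\mu_P$'s --- essentially the class of an interpolating divisor linearly equivalent to $C$. Since $C \sim \Div_\infty(f)$ on the smooth complete toric surface $X$, the hypothesis $D \ge 2\Div_\infty(f)$ makes $L - D$ sufficiently negative along $\partial X$ to force $H^1(X, \mathcal{O}_X(L-D)) = 0$ by a standard toric vanishing theorem (or by a direct \v{C}ech computation on the affine toric cover). The factor $2$ is exactly what is needed for this vanishing to hold simultaneously for the existence of the lift of $\gamma_C$ itself and for the rigidity that pins down $E$ modulo the components $C_j$.

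The main obstacle will be setting up the osculation exact sequence correctly in the toric context and verifying the $H^1$-vanishing with the required twist, in particular tracking through the non-reduced scheme structure of $D$ and the dependence on the chosen refinement $\Sigma$. The transversality provided by $(H_2)$ is then what makes $i^*$ behave linearly block-by-block in the $\mu_P$'s, so that the cohomological vanishing translates cleanly into $i^*(E') = 0$, hence $\mu_P = a_{j(P)}$ for all $P$ and $\gamma \in \langle \gamma_1,\ldots,\gamma_s\rangle$.
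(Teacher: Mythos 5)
Your reduction to showing $\dim_{\kp} V(D) \le s$ and the observation that pairwise-disjoint $\{0,1\}$-vectors are automatically a reduced echelon basis are both correct. But the core of the argument has a genuine gap, and the route proposed is not the one that makes the hypothesis $D\ge 2\Div_\infty(f)$ do its work.

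First, the decomposition $E=\sum_j a_j C_j + E'$ and the reduction to ``$i^*(E')=0$ for every $E'$ sharing no component with $C$'' is not established and is stronger than what can be expected. The divisor $E'$ is only constrained by $i^*(E')\in V$; a priori a curve disjoint from the components of $C$ could still pass through a point $p\in C\cap\partial X$ with a local branch osculating $C$ to order $\ge 2d_i$, making $i^*(E')$ a nonzero element of $V$. Ruling this out is essentially the entire content of the theorem, and the vague appeal to a vanishing $H^1(X,\mathcal{O}_X(L-D))=0$ cannot do it: that kind of vanishing only governs the \emph{surjectivity} of restriction on global sections (i.e.\ existence of lifts), not the rigidity statement you need.

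Second, and more importantly, the factor $2$ in $D\ge 2\Div_\infty(f)$ is not what makes the $H^1$ term vanish. The paper explicitly notes that the cohomological vanishing (its Lemma~4, giving a unique global $1$-form $\omega\in H^0(X,\Omega_X^1(\log B)\otimes\mathcal{O}_X(C))$ with $i^*(\omega)=i^*(dg/g)$, where $g$ is a rational function with $\Div(g)=C_\gamma-E_\gamma$) already holds for the smaller precision $D=\Div_\infty(f)+\partial X$. The factor $2$ is needed for a completely different step: proving $d\omega=0$. Concretely, $d\omega$ lands in $H^0(X,\Omega_X^2(B+2C)\otimes I_D)$, and exactly when $D\ge 2\Div_\infty(f)$ does multiplication by $f^2$ identify this with $H^0(X,\Omega_X^2(B))=\cp\, dt_1\wedge dt_2/(t_1t_2)$; then exactness of $d\omega$ forces the residue, hence the constant, to vanish. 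Only once $\omega$ is known to be \emph{closed} can one invoke Ruppert's theorem to write $\omega$ as a $\cp$-linear combination of the logarithmic derivatives $d\bar q_j/\bar q_j$ of the absolute factors plus $a_1\,dt_1/t_1+a_2\,dt_2/t_2$; a local residue computation at the points of $C\cap\partial X$ then identifies the coefficient of each $\gamma_P$ in $\gamma$ with the corresponding $c_j$, and a Galois argument shows that conjugate $c_j$'s agree, so $\gamma\in\langle\gamma_1,\dots,\gamma_s\rangle$. None of this machinery (logarithmic derivative, closedness, Ruppert, residue identification, Galois descent) appears in your sketch; the osculation criterion of the author's earlier paper is indeed used, but in the proof of Theorem~3 (the explicit equations for $V(D)$), not here. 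As written, your proposal would not close and does not explain what role the factor $2$ plays.
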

\vskip3mm
\noindent

The proof consists in associating to $\gamma\in V_{\zp}(D)$ a rational $1$-form  with polar divisor controled by $C$. For $D$ big enough, that form is closed and a theorem of Ruppert \cite{Ruppert:gnus} combined with a Galois theory argument permits to conclude that $\gamma$ is $\zp$-combination of the $\gamma_i$'s. There are examples in the dense case that show that the precision $D=2\Div_{\infty}(f)$ in Theorem $2$ is asymptotically sharp  (see \cite{L2:gnus}). 

\vskip2mm
\noindent

In order to apply Theorem $2$ to the factorization problem, we need to determine an explicit system of equations that gives the vector subspace $V(D)\subset V$. To this aim, we use a theorem of the author that characterizes the lifting property. We show in \cite{W:gnus} that there exists a morphism
$$
\Psi:\Divisor(D)\otimes \cp \rightarrow H^0(X,\Omega_{X}^2(D))^{\vee}
$$
so that $\gamma\in \Divisor(D)$ extends to $X$ if and only if $\Psi(\gamma)=0$. Roughly speaking, the linear form $\Psi(\gamma)$ sends a rational form $\omega\in H^0(X,\Omega_X^2(D))$ to the sum of residues of a primitive of $\omega$ along a local analytic lifting curve of $\gamma$. In some sense, this result can be regarded as a converse to the classical residue theorem, we refer to \cite{W:gnus} for details. This permits to prove the following 
\vskip2mm
\noindent
\begin{theo}
Suppose that $D$ satisfies the hypothesis of Theorem $2$. Then, 
$$
V(D) =\ker (A)
$$
for some explicit matrix $A=(a_{P,m})_{P\in \mathcal{P},m\in M}$ with coefficients in $\kp$, where $M$ is the set of interior lattice points of the polytope $2N_f$. 
\end{theo}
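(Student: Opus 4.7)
The plan is to combine the lifting criterion of \cite{W:gnus} with the toric description of $H^0(X,\Omega_X^2(D))$ so as to write the condition $\Psi(\gamma)=0$ as an explicit linear system. By the author's result, a divisor $\gamma\in\Divisor(D)\otimes\kp$ extends to a Cartier divisor of $X$ if and only if $\Psi(\gamma)=0$; applied to $V$, this already yields $V(D)=V\cap\ker(\Psi|_V)$. Since $(\gamma_P)_{P\in\mathcal{P}}$ is a basis of $V$, the linear map $\Psi|_V$ is represented, in any chosen basis of the target, by the matrix whose $P$-th row lists the values $\Psi(\gamma_P)(\omega)$ as $\omega$ runs through that basis, so the whole issue is to choose a convenient basis of the target that is naturally indexed by $M$.

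The basis is provided by toric geometry. For a smooth complete toric surface, $\Omega_X^2\cong\mathcal{O}_X(-\partial X)$, hence
\[
\Omega_X^2(2\Div_\infty(f))\cong\mathcal{O}_X\bigl(2\Div_\infty(f)-\partial X\bigr).
\]
Because $\Div_\infty(f)$ is the toric divisor whose associated polytope is $N_f$, the global sections of $\mathcal{O}_X(2\Div_\infty(f)-\partial X)$ admit, via the Laurent basis $\omega_m:=t^m\,dt_1\wedge dt_2/(t_1t_2)$, a $\kp$-basis indexed exactly by the interior lattice points of $2N_f$, i.e.\ by $M$.

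The key step is then to argue that $\Psi|_V$ factors through the restriction
\[
H^0(X,\Omega_X^2(D))^\vee\longrightarrow H^0\bigl(X,\Omega_X^2(2\Div_\infty(f))\bigr)^\vee.
\]
By the construction recalled just before the statement, $\Psi(\gamma_P)(\omega)$ is computed as a sum of residues of a primitive of $\omega$ along an analytic local lifting of $\gamma_P$ which, by definition, is an analytic factor of $f$ modulo a local equation of $D$; as in the proof of Theorem $2$, the support of this lifting lies in a neighbourhood of $\partial X$ and is governed by the polar divisor $\Div_\infty(f)$ of $f$. Consequently only the part of $\omega$ with poles bounded by $2\Div_\infty(f)$ contributes, and sections of $\Omega_X^2(D)$ vanishing in $H^0(X,\Omega_X^2(2\Div_\infty(f)))$ pair trivially with every $\gamma_P$. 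This justifies setting
\[
a_{P,m}:=\Psi(\gamma_P)(\omega_m), \qquad P\in\mathcal{P},\ m\in M,
\]
and the identification $V(D)=\ker(A)$ then follows by combining the first step with the previous two. The coefficients $a_{P,m}$ lie in $\kp$ because $X$, $D$, $f$ and each $P$ are $\kp$-rational and the residue operation preserves $\kp$-rationality; their explicit form is obtained by unrolling the residue formulas of \cite{W:gnus} on the monomial $2$-forms $\omega_m$.

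The main obstacle is the factorization through the smaller test space $H^0(X,\Omega_X^2(2\Div_\infty(f)))$: this requires a careful analysis of the polar behaviour along $\partial X$ of the primitive $1$-form used to define $\Psi$, in order to ensure that excess poles of $\omega$ beyond $2\Div_\infty(f)$ really do not contribute. Once this is secured, the remaining work is combinatorial book-keeping in the toric charts covering $\partial X$ and yields an explicit formula for each entry $a_{P,m}$.
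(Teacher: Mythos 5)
Your overall plan matches the paper's: use the osculation pairing of \cite{W:gnus} together with the toric identification of $H^0(X,\Omega_X^2(2\Div_\infty(f)))$ to turn the lifting criterion into an explicit linear system. However, there are several concrete problems.

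First, the anticanonical formula is wrong. For a smooth complete toric surface, $\Omega_X^2\cong\mathcal{O}_X(-B)$ where $B=X\setminus\mathbb{T}=D_0+D_1+\cdots+D_{r+1}$ is the \emph{full} toric boundary, not $\mathcal{O}_X(-\partial X)$ with $\partial X=D_1+\cdots+D_r$. If one carries your formula through, the polytope of $\mathcal{O}_X(2\Div_\infty(f)-\partial X)$ would be $\{m\in\np^2: \langle m,\eta_i\rangle>-2d_i,\ i=1,\ldots,r\}$, which contains lattice points with $m_1=0$ or $m_2=0$ that are not interior to $2N_f$, so you would get a strictly larger index set than the $M$ the theorem demands. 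The correct twist is $\mathcal{O}_X(2\Div_\infty(f)-B)$, whose lattice points are precisely the interior lattice points of $2N_f$.

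Second, the ``main obstacle'' you flag --- that $\Psi|_V$ should factor through $H^0(X,\Omega_X^2(2\Div_\infty(f)))^\vee$ --- is not an obstacle at all. Theorem~2 shows that the subspace $V(D)$ is independent of the choice of $D\ge 2\Div_\infty(f)$, so one simply fixes $D=2\Div_\infty(f)$ and computes with that specific divisor; this is exactly what the paper does. There is nothing to ``factor through'': you build the matrix once, for the minimal $D$.

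Third, and most substantively, the theorem asks for an \emph{explicit} matrix with coefficients in $\kp$. The assertion that ``the residue operation preserves $\kp$-rationality'' is a hand-wave that does not produce a matrix one can actually write down. The paper's key device is to work in the residue field $\kp_P=\kp[y]/(P)$ and use the \emph{trace}: writing $\phi_P\in\kp_P[[x]]$ for the unique lifting with $\phi_P(0)=y_P$, the formula of Proposition~1 in \cite{W:gnus} gives $\langle\gamma_p,\psi_m\rangle=\coef(\B^{\langle m,\eta_{i+1}\rangle}(\phi_p),x^{-\langle m,\eta_i\rangle})$, and summing over the Galois-conjugate roots $y_p$ of $P$ yields
\[
a_{Pm}=\sum_{j=0}^{l_P-1}\Tr^j(P)\,\coef_{-\langle m,\eta_i\rangle}\bigl(\B^{\langle m,\eta_{i+1}\rangle}(\phi_P),y_P^j\bigr)\in\kp .
\]
Without this trace reduction you have neither explicitness nor a mechanically verifiable $\kp$-rationality, which are the actual content of the statement. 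So the proposal identifies the right ingredients but leaves the central computation --- the one that produces the matrix --- unproved.
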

\vskip2mm
\noindent

So we can solve the recombination problem with linear algebra over $\kp$. Then, we compute the rational factors of $f$ by solving systems of affine equations. We finally obtain a deterministic polynomial complexity algorithm for rational toric factorization of bivariate polynomials. We describe briefly the main steps of the algorithm. The given complexities are obtained in Corollaries $1$ and $2$ in Section $3$. As in \cite{GG:gnus}, we use the notation $\wt{\mathcal{O}}$ for the soft complexity.   
\vskip4mm
\noindent
{\bf Toric Factorization Algorithm}
\vskip2mm
\noindent
{\bf Input:} $f\in \kp[t_1,t_2]$  satisfying hypothesis $(H_1)$ and $(H_2)$.
\vskip1mm
\noindent
{\bf Output:} The irreducible rational factors of $f$.
\vskip2mm
\noindent
{\it Step $0$: Univariate factorization.} Compute the set $\mathcal{P}$ of irreducible rational factors of the exterior facet polynomials of $f$. 
\vskip2mm
\noindent
{\it Step $1$: Lifting.} This is the $\gamma_P$'s computation step. For each $P\in\mathcal{P}$, compute the associated local factor of $f$ modulo  the local equation of $D=2\Div_{\infty}(f)$. This step has complexity $\wt{\mathcal{O}}(\Vol(N_f)^{2})$.
\vskip2mm
\noindent
{\it Step $2$: Recombination.}  

a) Build the matrix $A$ of Theorem $2$. This step has complexity $\wt{\mathcal{O}}(\Vol(N_f)^{2})$.

b) Compute the reduced echelon basis associated to $A$. This step has complexity $\mathcal{O}(\Vol(N_f) \Card(\mathcal{P})^{\omega-1})$.
\vskip2mm
\noindent
{\it Step $3$: Factors computation.} Solve some affine systems of linear equations over $\kp$ to recover the rational factors of $f$. This step has complexity $\mathcal{O}(\Vol(N_f)^{\omega})$.
\vskip4mm
\noindent

As already mentioned, the great advantage of our algorithm is that it replaces the usual univariate factorization in degree $d=\deg(f)$ by the factorization of the exterior facet polynomials of $f$ : their degree sum is at most $d$, and much smaller in many significant cases. It follows that both the number of unknowns and equations in the recombination process decrease too and the basis computation step $2$ $b)$ is faster than in \cite{L2:gnus}. Steps $1$ and $2$ $a)$ rely on classical modular algorithms (Newton iteration, modular multiplication) whose complexity analysis is delicate due to the sparseness of $f$. This partially explains that our lifting complexity does not reach the soft complexity $\wt{\mathcal{O}}(d^{2})$ obtained in \cite{L2:gnus} for dense polynomials. Step $3$ has the highest cost of the algorithm because in the general sparse case we have to use linear algebra instead of the fast multiplication or partial fraction decomposition methods that are used for dense polynomials (\cite{Gao1:gnus}, \cite{L:gnus}, \cite{CL:gnus}). Finally, let us mention that the  algorithm developed by Lecerf \cite{L:gnus} in the bidegree case suggests that it is possible to reduce both the number of facet factorizations and the lifting precision. We refer to Subsection $3.5$ and Section $4$ for further comments.

\vskip3mm
\noindent
{\bf Related results.} Classical results about polynomial factorization can be found in \cite{GG:gnus}. For more recent advances, we refer the reader to the introduction of \cite{CL:gnus} (and to the complete list of references therein) that gives a large and comprehensive overview of the current algorithms for factorization of polynomials. We only discuss here the most related results. 

\vskip1mm
\noindent {\it Using linear algebra.} Factoring multivariate polynomials by means of linear algebra has been made possible by the powerful irreducibility criterion of Ruppert \cite{Ruppert:gnus}. This is the so-called logarithmic derivative method, that relates the basis computation of the vector space of closed rational $1$-forms with some appropriate polar divisor. This point of view has been developed by Gao in \cite{Gao1:gnus}, who combined the logarithmic derivative method with the Rothstein-Trager algorithm for absolute partial fraction decomposition (\cite{GG:gnus}, Theorem $2.8$). Finally, as pointed out in the introduction, Lecerf \cite{L2:gnus}, \cite{L:gnus} and Ch\`eze-Lecerf \cite{CL:gnus} recently developed very efficient hybrid algorithms for rational and absolute factorization, by combining Gao's approach with a lifting and recombination scheme. This is the point of view we follow here. 

\vskip1mm
\noindent {\it Using Newton polytopes.} Factoring polynomials by taking into account the Newton polytope is an active area of research. In \cite{EGW:gnus}, M. Elkadi, A. Galligo and the author use some probabilistic interpolation criterions \cite{W1:gnus}, by replacing the divisor $D$ with a generic ample curve ``close to the boundary''. In \cite{W:gnus}, the author looks for the \textit{effective} decompositions of $\gamma_C$ that may be lifted to $X$. By taking into account natural degree conditions imposed by the Minkowski-sums decompositions of $N_f$, there appears supplementary vanishing cohomology properties of the osculating divisors that permit to use the smaller precision $D=\Div_{\infty}(f)+\partial X$ (see also \cite{L:gnus} for a similar comparison in the dense case). In return, it gives a problem of partitions of $V(D)\cap \{0,1\}^{\mathcal{P}}$ that has exponential complexity in the worst case (see Subsection $3.6$). A comparable algorithm is obtained in  \cite{Gao:gnus}, where the authors use a more combinatorial approach. In \cite{Sombra:gnus}, the authors show that the low degree factors of $f$ can be  computed in polynomial time with respect to the fewnomial encoding of $f$.

\vskip3mm
\noindent
{\bf Organization.} We prove Theorem $2$ in the next Section $2$. In Section $3$, we develop a toric factorization algorithm and we prove Theorem $1$. In Section $4$, we compare our method with the most related dense and toric algorithms and we discuss some possible improvements.  We conclude in Section $5$.

\section{Proof of Theorem $2$}

We follow the  notations of the introduction. 
We saw that the rational decomposition 
$C=C_1\cup \cdots \cup C_s$ of $C$ generates a vector subspace $\langle \gamma_1,\ldots,\gamma_s\rangle\subset V(D)$ and we want to show
that the opposite inclusion holds when $D\ge 2\Div_{\infty}(f)$. The strategy consists in associating to $\gamma\in V_{\zp}(D)$ a closed rational $1$-form $\omega$ on $X$ whose polar divisor is controled by $C$. A theorem of Ruppert \cite{Ruppert:gnus} implies that $\omega$ is a $\cp$-linear combination of the logarithmic derivatives of the absolute factors of $f$. Finally, we conclude by Galois theory that $\gamma$ is $\zp$-combination of the $\gamma_j$'s. 

We need first two preliminaries lemmas that clear up the behaviour of restriction  with respect to derivation. The remaining part of the proof will follow in Subsection $2.2$.  If not specified, all schemes are considered over $\cp$.

\subsection{Notations and preliminaries lemmas.}

We denote by $I_D$ the ideal sheaf of $D$ and by $\mathcal{O}_D$ its structural sheaf. The structural sequence of $D$ is
\begin{eqnarray}
0\rightarrow I_D\rightarrow \mathcal{O}_X \stackrel{i^*}{\rightarrow} \mathcal{O}_D\rightarrow 0,
\end{eqnarray}
where the restriction map $i^*$ is induced by the inclusion $i:D\rightarrow X$.
We denote by $\mathcal{O}_X(D)$ the sheaf of rational functions with polar divisor bounded by $D$, by $\Omega_X^q$ the sheaf of regular $q$-forms and we let $\Omega_X^q(D):=\Omega_X^q\otimes \mathcal{O}_X(D)$.

We say that $B\in \Divisor(X)$ is a normal crossing divisor if it has local equation $x_1\cdots x_r=0$ where the $x_i$'s form part of a local system of coordinates $(x_1,\ldots,x_n)$ of $X$ (so $n=2$ in our case). For such a $B$, we introduce the sheaf $\Omega_X^q(\log B)$ of rational $q$-forms with logarithmic poles along $B$. By definition, $\phi\in \Omega_X^q(\log B)$ if and only if both $h\phi$ and $h d\phi$ are regular for some local equation $h=0$ of $B$. It is well known that $\Omega_X^q(\log B)$ is a locally free sheaf of $\mathcal{O}_X$-module \cite{Voisin:gnus}.

\vskip1mm
\noindent

The following lemma clears up the behaviour of the restriction morphism with derivation.

\vskip1mm
\noindent
\begin{lemm} Let $B$, $D$ as before, with $|D|\subset |B|$. Let $F$ be an effective divisor which intersects $D$ properly. The differential $d$ induces a commutative diagram
$$
\begin{array}{ccccccccc}
\Omega_X^1(\log B)\otimes \mathcal{O}_X(F) & \stackrel{d}{\rightarrow} & \Omega_X^2(B)\otimes \mathcal{O}_X(2F) \\
\downarrow i^*  &  & \downarrow   i^*    \\
\Omega_X^1(\log B)\otimes \mathcal{O}_D(F) & \stackrel{d_D}{\rightarrow} & \Omega_X^2(B)\otimes \mathcal{O}_D(2F). \\
\end{array}
$$
\end{lemm}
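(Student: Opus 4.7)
The plan is to verify three things in order: (i) the top horizontal arrow is well defined at the sheaf level, i.e. $d\phi\in\Omega_X^2(B)(2F)$ for every $\phi\in\Omega_X^1(\log B)(F)$; (ii) $i^*(d\phi)$ depends only on $i^*\phi$, so that the formula $d_D(i^*\phi):=i^*(d\phi)$ produces an unambiguous map; (iii) the diagram then commutes by construction.

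For (i), I work locally and write $\phi=\sigma/f$ with $\sigma$ a section of $\Omega_X^1(\log B)$ and $f=0$ a local equation of $F$. Then
$$
d\phi=\frac{d\sigma}{f}-\frac{\sigma\wedge df}{f^{2}}.
$$
By definition of $\Omega_X^1(\log B)$, $d\sigma$ is a section of $\Omega_X^2(B)$. Since $df$ is regular and $\sigma$ has at most simple poles along $B$, the wedge $\sigma\wedge df$ also lies in $\Omega_X^2(B)$. Hence $d\phi$ has pole order at most $2$ along $F$ and at most $1$ along $B$, which gives (i).

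For (ii), I must check that any $\phi$ in the kernel of $i^*$ satisfies $i^*(d\phi)=0$. Since $\mathcal{O}_X(F)$ is locally free and $I_D$ is locally principal, such a $\phi$ has the local form $\phi=h\psi$, where $h=0$ is a local equation of $D$ and $\psi\in\Omega_X^1(\log B)(F)$. I choose local coordinates in which $B$ has the normal-crossing equation $x_1\cdots x_r=0$; since $|D|\subset|B|$, one can take $h=x_1^{a_1}\cdots x_r^{a_r}$ with $a_i\in\np$, which gives the key identity
$$
dh=h\cdot\eta,\qquad \eta:=\sum_{i=1}^{r}a_i\,\frac{dx_i}{x_i}\in\Omega_X^1(\log B).
$$
Consequently
$$
d\phi=dh\wedge\psi+h\,d\psi=h\bigl(\eta\wedge\psi+d\psi\bigr).
$$
By (i) applied to $\psi$, $d\psi\in\Omega_X^2(B)(2F)$. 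The wedge $\eta\wedge\psi$ is a section of $\Omega_X^2(\log B)(F)=\Omega_X^2(B)(F)$ (the equality uses that $X$ is a surface and $B$ is reduced normal crossing), hence also of $\Omega_X^2(B)(2F)$. It follows that $d\phi\in I_D\cdot\Omega_X^2(B)(2F)$, so $i^*(d\phi)=0$, which is (ii).

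Step (iii) is then tautological from the definition of $d_D$. The main obstacle is the logarithmic factorization $dh=h\eta$ in (ii): it is precisely the combination of the hypothesis $|D|\subset|B|$ with the normal-crossing structure of $B$ that produces the extra factor $h$ in $dh$ and keeps $\eta$ in $\Omega_X^1(\log B)$. Without either hypothesis, the term $dh\wedge\psi$ could have nonzero restriction to $D$ and $d_D$ would fail to be well defined.
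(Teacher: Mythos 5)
Correct, and essentially the same approach as the paper: the crucial observation in both is that since $D$ is supported on the normal-crossing divisor $B$, its local equation $h=x_1^{a_1}\cdots x_r^{a_r}$ satisfies $dh/h\in\Omega^1_X(\log B)$, so differentiating a section in $I_D\cdot\Omega^1_X(\log B)\otimes\mathcal{O}_X(F)$ does not destroy the factor of $h$. The paper carries this out coordinate by coordinate, writing $\phi$ in the standard local frame of $\Omega^1_X(\log B)$ (which also keeps the argument valid in any dimension), while you factor $\phi=h\psi$ once and invoke $\Omega^2_X(\log B)=\Omega^2_X(B)$ on the surface; the underlying computation is the same.
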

\vskip1mm
\noindent
\begin{proof}
We show Lemma $1$ for an arbitrary smooth complete variety $X$ of dimension $n$. Since $B$ is normal crossing, it has local equation $x_1\cdots x_r=0$ where the $x_i$'s form part of a local system of coordinates $(x_1,\ldots,x_n)$ of $X$. The sheaf $\Omega_X^1(\log B)$ is a locally free  sheaf of $\mathcal{O}_X$-modules and a germ $\phi\in \Omega_X^1(\log B)\otimes \mathcal{O}_X(F)$ has a unique representation
$$
\phi=h_1 dx_1/x_1+\cdots +h_r dx_r/x_r+h_{r+1}dx_{r+1}+\cdots+h_ndx_n
$$
for some $h_i\in \mathcal{O}_X(F)$ (see \cite{Voisin:gnus}, p. 186). It is clear that $dh_i$ has its polar divisor bounded by $2F$. We deduce that 
$$
d\phi=dh_1\land dx_1/x_1+\cdots +\cdots+dh_n\land dx_n
$$
belongs to $\Omega_X^2(B+2F)$ and the upper row is well-defined. In order to show that $d_D$ is well-defined, we need to show that
$$
\phi\in  \Omega_X^1(B+F)\otimes I_D\Longrightarrow d\phi\in \Omega_X^2(B+2F)\otimes I_D.
$$
Since $D$ is supported on $|B|$, it has local equation $x^k:=x_1^{k_1}\cdots x_r^{k_r}$ for some $k_i\in \np$. Thus if $\phi\in  \Omega_X^1(B+F)\otimes I_D$, we have $h_i=x^k h'_i$ for some $h_i'\in \mathcal{O}_X(F)$ and 
$$
\frac{dh_i}{x^k} \land \frac{dx_i}{x_i} =h'_i \sum_{j=1}^r k_j \frac{dx_j}{x_j}\land \frac{dx_i}{x_i}+ dh'_i\land \frac{dx_i}{x_i}
$$
belongs to $\Omega_X^2(B+2F)$ for all $i=1,\ldots,r$. In the same way, it is easy to check that $dh_i\land dx_i/x^k\in \Omega_X^2(B+2F)$ for $i>r$. Multiplying by $x^k$, we obtain that  $d\phi\in \Omega_X^2(B+2F)\otimes I_D$. 
\end{proof}

We now pay attention to the behaviour of the restriction map with logarithmic derivation. 
We denote by $\mathcal{M}_{X,D}$ the sheaf of rational functions whose polar locus intersects $D$ properly. We have an exact sequence
\begin{eqnarray}
0\rightarrow I_D\mathcal{M}_{X,D}\rightarrow \mathcal{M}_{X,D}\stackrel{i^*}{\rightarrow}\mathcal{M}_{D}\rightarrow 0,
\end{eqnarray}
where $\mathcal{M}_D:=\mathcal{M}_{X,D}\otimes\mathcal{O}_D$ is the sheaf of rational sections of $\mathcal{O}_D$. The multiplicative version of $(2)$ is
\begin{eqnarray}
0\rightarrow 1+I_D\mathcal{M}_{X,D}\rightarrow \mathcal{M}_{X,D}^*\stackrel{i^*}{\rightarrow}\mathcal{M}_{D}^*\rightarrow 0,
\end{eqnarray}
where $^*$ stands for the multiplicative sheaves of units. On the other hand, the logarithmic derivative $d\log (h):=dh/h$ induces the natural morphisms
\begin{eqnarray}
d\log : \mathcal{M}_{X,D}^*\rightarrow \Omega_X^1\otimes \mathcal{M}_{X,D}
\quad {\rm and} \quad d\log : \mathcal{O}_{X}^*\rightarrow \Omega_X^1
\end{eqnarray}
of sheaves of abelian groups. We have the following
\vskip1mm
\noindent
\begin{lemm} 
Let $B$, $D$ as before and suppose that $|D|\subset |B|$. The  morphisms in $(4)$ combined with the natural inclusion $j:\Omega_X^1\otimes \mathcal{M}_{X,D}\rightarrow \Omega_X^1(\log B)\otimes \mathcal{M}_{X,D}$ induce the commutative diagram
$$
\begin{array}{ccccccccc}
 \mathcal{M}_{X,D}^* \qquad\qquad & \stackrel{i^*}{\rightarrow} & \mathcal{M}_{D}^* \qquad & \rightarrow & 0\\
  \downarrow j \circ d\log & & \downarrow d_D\log &  \\
 \Omega_X^1(\log B)\otimes \mathcal{M}_{X,D}& \stackrel{i^*}{\rightarrow} & \Omega_X^1(\log B)\otimes \mathcal{M}_{D}& \rightarrow & 0,\\
\end{array}
$$
and its regular version
$$
\begin{array}{ccccccccc}
 \mathcal{O}_{X}^* & \stackrel{i^*}{\rightarrow} & \mathcal{O}_{D}^* & \rightarrow & 0\\
   \downarrow  & & \downarrow \\
 \Omega_X^1(\log B) & \stackrel{i^*}{\rightarrow} & \Omega_X^1(\log B)\otimes \mathcal{O}_D & \rightarrow & 0.\\
\end{array}
$$
\end{lemm}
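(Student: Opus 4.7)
My plan is to define $d_D\log$ by the obvious lifting recipe and to check that the ambiguity of the lift is killed after restriction; commutativity of both squares is then built into the construction. The crux is a local computation using that $D$ is supported on the normal crossing divisor $B$.

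Given a germ $g\in \mathcal{M}_D^*$, I choose a lift $h\in \mathcal{M}_{X,D}^*$ via the surjection in $(3)$, form $j(d\log h) = dh/h\in \Omega_X^1(\log B)\otimes \mathcal{M}_{X,D}$, and set
$$
d_D\log(g) \,:=\, i^*\bigl(j(d\log h)\bigr) \,\in\, \Omega_X^1(\log B)\otimes \mathcal{M}_D.
$$
Commutativity of the upper square is then tautological, and the additivity identity $d\log(h_1h_2)=d\log h_1+d\log h_2$ upgrades $d_D\log$ to a morphism of sheaves of abelian groups as soon as it is well defined.

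The main obstacle is precisely this well-definedness. Two lifts $h,h'$ of $g$ satisfy $h/h' = 1+f$ with $f\in I_D\mathcal{M}_{X,D}$ by exactness of $(3)$, so
$$
d\log h - d\log h' \,=\, d\log(1+f) \,=\, \frac{df}{1+f},
$$
and I must show this germ lies in $I_D\cdot \Omega_X^1(\log B)\otimes \mathcal{M}_{X,D}$. Working locally in coordinates $(x_1,\ldots,x_n)$ adapted to $B$, I write a local equation of $D$ as $x^k = x_1^{k_1}\cdots x_r^{k_r}$ and $f = x^k f'$ with $f'\in \mathcal{M}_{X,D}$. Then
$$
df \,=\, f'\sum_{j=1}^r k_j\, x^k\,\frac{dx_j}{x_j} \,+\, x^k\, df',
$$
and each summand visibly lies in $I_D\cdot \Omega_X^1(\log B)\otimes \mathcal{M}_{X,D}$: the first one directly, and the second because $df'$ belongs to $\Omega_X^1\otimes \mathcal{M}_{X,D}\subset \Omega_X^1(\log B)\otimes \mathcal{M}_{X,D}$ and is multiplied by the local equation $x^k$ of $D$. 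Since $1+f$ restricts to $1$ on $D$, it is a local unit near $|D|$, so division by $1+f$ preserves membership in $I_D\cdot \Omega_X^1(\log B)\otimes \mathcal{M}_{X,D}$. Applying $i^*$ therefore kills $d\log h - d\log h'$, proving that $d_D\log$ is well defined.

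The regular version of the diagram is established by exactly the same argument with $\mathcal{M}$-sheaves replaced by $\mathcal{O}$-sheaves throughout: now $f'\in \mathcal{O}_X$, the element $1+f\in \mathcal{O}_X^*$ is a genuine unit near $D$, and the same local identity for $df$ shows that the difference of two lifts restricts to zero in $\Omega_X^1(\log B)\otimes \mathcal{O}_D$. No further input is required.
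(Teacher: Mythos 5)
Your proposal is correct and follows essentially the same route as the paper: define $d_D\log$ by choosing a lift along the surjection $i^*$ in $(3)$, reduce well-definedness to showing that $d\log(1+f)$ with $f\in I_D\mathcal{M}_{X,D}$ restricts to zero, and verify this by a local computation in coordinates adapted to the normal crossing divisor $B$, using that tensoring $(2)$ with the locally free sheaf $\Omega_X^1(\log B)$ identifies $\ker i^*$ with $\Omega_X^1(\log B)\otimes I_D\mathcal{M}_{X,D}$. The only (welcome) difference is that you carry out the computation directly for a general local equation $x^k=x_1^{k_1}\cdots x_r^{k_r}$ of $D$, whereas the paper does the smooth-point case $r=1$ explicitly and leaves the singular points of $B$ as an easy check.
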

\vskip1mm
\noindent

\begin{proof}
Let $\alpha\in \mathcal{M}_{D}^*$ and $u\in\mathcal{M}_{X,D}^*$ so that $\alpha=i^*(u)$. By $(4)$, the morphism 
$$
d_D\log : \mathcal{M}_{D}^* \rightarrow \Omega^1_X(\log B)\otimes\mathcal{M}_{D},\quad d_D\log(\alpha):=i^*(j\circ d\log (u))
$$
will be well-defined if we show that 
$$
u\in 1+I_D\mathcal{M}_{X,D}\Rightarrow d_D\log(u)=0.
$$
So let $u=1+h$, for $h\in I_{D}\mathcal{M}_{X,D}$ a germ at some smooth point of $B$. Thus $B$ and $D$ have respective local equation $x=0$ and $x^k=0$ for some $k\ge 0$, and $h=mx^{k}$ where $m\in \mathcal{M}_{X,D}$. So
$$
d\log(1+h)=\frac{kmx^{k-1}dx+x^kdm}{1+x^k m}=\frac{mx^{k}}{1+x^km}\big(k\frac{dx}{x}+\frac{dm}{m}\big)
$$
belongs to the subsheaf $\Omega_{X}^1(\log(B))\otimes I_D\mathcal{M}_{X,D}\subset \Omega_X^1\otimes \mathcal{M}_{X,D}$. By tensoring $(2)$ with the locally free sheaf $\Omega_{X}^1(\log(B))$ we deduce that 
$$
i^*(j\circ d\log(1+h))=0\,\,\in \Omega^1_X(\log B)\otimes\mathcal{M}_{D}.
$$
The divisor $B$ being normal crossing, we check easily that the same conclusion holds when $h$ is a germ at some singular point of $B$. This implies that the map $d_D\log$ is well-defined, giving the first diagram. The regular version follows by letting $m\in \mathcal{O}_X$ in the previous reasonning and by using the multiplicative version of $(1)$.
\end{proof}
\vskip1mm
\noindent

\subsection{Proof of Theorem $2$.} 

We come now to the proof of Theorem $2$. We denote by 
$$
\mathbb{T}:=\Spec \cp[t_1^{\pm 1},t_2^{\pm 1}]\quad {\rm and}\quad \cp^2:=\Spec \cp[t_1,t_2]
$$
the complex torus of $X$ and the affine plane endowed with canonical coordinates $t=(t_1,t_2)$. We identify rational forms of $\mathbb{T}$ and $\cp^2$ with the rational form they induce on $X$.  We suppose from now that $D=2\Div_{\infty}(f)$ and that $B=X\setminus \mathbb{T}$. The toric surface $X$ being smooth, the toric divisor $B$ is normal crossing. Since $f$ has no poles in the torus, we have $|D|\subset|B|$.

Let $\gamma\in V(D)$. We need to show that $\gamma$ is linear combinations of the $\gamma_j$'s. There is no loss of generality to suppose that $\gamma\in V_{\zp}(D)$. Let $\Gamma:=C\cap \partial X$ be the intersection of $C$ with the boundary. By hypothesis, we know that
\begin{eqnarray}
\gamma=\sum_{p\in\Gamma} \mu_p\gamma_p\quad{\rm and}\quad \gamma=i^*(C_{\gamma}),
\end{eqnarray}
where $\gamma_p\in \Divisor(D)$ is induced by the germ of $C$ at $p$, $\mu_p$ is an integer and where $C_{\gamma}\in \Divisor(X)$. Since the torus has a trivial Chow goup, there exists $E_{\gamma}$ supported on $B$ so that 
$$
\Div(g)=C_{\gamma}-E_{\gamma}
$$ 
for some rational function $g\in \cp(X)$. The  following key lemma ensures that the poles of the restriction $i^*(dg/g)$ are controled by $C$. 
\vskip1mm
\noindent

\begin{lemm} We have
$
i^*(dg/g)\in H^0(X,\Omega_X^1(\log B)\otimes \mathcal{O}_D(C)).
$
\end{lemm}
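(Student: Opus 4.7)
The plan is to verify the claim locally at each point $p\in D$ and then patch. Choose local coordinates $(x_1,x_2)$ at $p$ adapted to the normal crossing divisor $B$, so that $B$ has equation $x_1=0$ (at a smooth point) or $x_1x_2=0$ (at a node). Since $|D|\subset|B|$, the local equation of $D$ is a monomial $x^k=x_1^{k_1}x_2^{k_2}$ with $k_i\ge 1$. Using that $E_{\gamma}$ is supported on $B$, $g$ factors locally at $p$ as $g=x^{-a}\,u\,\phi$, where $a\in\zp^2$, $u\in\mathcal{O}_{X,p}^*$ is a unit, and $\phi\in\mathcal{M}_{X,D}^*$ is a local equation of $C_{\gamma}$. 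The computation
$$
\frac{dg}{g}=-\sum_i a_i\frac{dx_i}{x_i}+\frac{du}{u}+\frac{d\phi}{\phi}
$$
shows the first two summands already lie in $\Omega_X^1(\log B)\otimes\mathcal{O}_X$, so only $d\phi/\phi$ requires control on restriction.

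The key step exploits hypothesis $(H_2)$: since the exterior facet polynomials of $f$ are squarefree, $C$ meets $\partial X$ transversally, so at every $p\in\Gamma=C\cap\partial X$ there is a unique smooth branch of $C$ at $p$, with local equation $f_p$. The identity $i^*(C_\gamma)=\gamma=\mu_p\gamma_p$ near $p$ forces $i^*(\phi)$ and $(i^*f_p)^{\mu_p}$ to define the same Cartier divisor on $D$, hence to differ by a unit $v\in\mathcal{O}_{D,p}^*$. Lifting $v$ to a unit $\tilde v\in\mathcal{O}_{X,p}^*$ and using the exact sequence $(3)$ produces the multiplicative decomposition
$$
\phi=\tilde v\cdot f_p^{\mu_p}\cdot(1+\eta),\qquad \eta\in I_D\mathcal{M}_{X,D},
$$
with all factors in $\mathcal{M}_{X,D}^*$. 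Applying Lemma $2$ to $\phi$ and using multiplicativity of $d_D\log$ together with $i^*(1+\eta)=1$ (by $(3)$),
$$
i^*\!\left(\frac{d\phi}{\phi}\right)=d_D\log(i^*\phi)=i^*\!\left(\frac{d\tilde v}{\tilde v}\right)+\mu_p\,i^*\!\left(\frac{df_p}{f_p}\right).
$$
The first summand is regular, and the second has a simple pole along $C$; their sum therefore lies in $H^0(X,\Omega_X^1(\log B)\otimes\mathcal{O}_D(C))$. At a point $p\in D\setminus\Gamma$ one has $p\notin C$ and $\phi$ is itself a unit in $\mathcal{O}_{X,p}^*$, and the conclusion is immediate.

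The main obstacle is producing the multiplicative decomposition of $\phi$; once it is in hand, Lemma $2$ and sequence $(3)$ do the rest. The essential role of $(H_2)$ is to guarantee a single smooth branch at each $p\in\Gamma$, so that $i^*(\phi)$ compares to a single power $(i^*f_p)^{\mu_p}$. Without this transversality, $i^*(\phi)$ would factor over several branches with potentially different multiplicities, and the clean identification of $i^*(d\phi/\phi)$ with a combination of logarithmic forms bounded by $C$ would break down.
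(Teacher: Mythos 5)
Your proof is correct and follows essentially the same route as the paper: both isolate the local equation of $C_\gamma$ from the $B$-supported part of $\Div(g)$, exploit the identity $i^*(C_\gamma,p)=\mu_p\gamma_p$ to compare that equation with $f_p^{\mu_p}$ modulo a unit of $\mathcal{O}_D$, and then invoke Lemma~2 to control the restricted logarithmic derivative. The only cosmetic difference is that you unwind Lemma~2 explicitly through the multiplicative decomposition $\phi=\tilde v\,f_p^{\mu_p}(1+\eta)$ obtained from sequence~(3), whereas the paper applies the regular version of Lemma~2 directly to the ratio $i^*(G_p/F_p^{\mu_p})\in\mathcal{O}_D^*$.
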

\vskip1mm
\noindent

\begin{proof}
Obviously, $i^*(dg/g)$ defines  a  rational section of $\Omega_{X}^1(\log B)\otimes \mathcal{O}_{D}$ and we need to show that the germ $g_p$ of $g$ at $p$  satisfies
$$
i^*(dg_p/g_p)\in \Omega_{X}^1(\log B)\otimes \mathcal{O}_{D}(C)
$$
for all $p\in |D|$ (for convenience, we omit the index $p$ in the stalk notations). Let us write $g_p=G_p/H_p$ for some local equations $G_p$ and $H_p$ of respectively $C_{\gamma}$ and $E_{\gamma}$. Thus
\begin{eqnarray}
dg_p/g_p=dG_p/G_p-dH_p/H_p.
\end{eqnarray}
Since $E_{\gamma}$ is supported on $B$, we have $dH_p/H_p\in \Omega_{X}^1(\log B)$ and it's enough to show that 
$$
i^*(dG_p/G_p)\in \Omega_{X}^1(\log B)\otimes \mathcal{O}_{D}(C)
$$
for all $p\in|D|$. 
For convenience, we let $\gamma_p:=0$ and $\mu_p:=0$ for $p\in |D|\setminus |\Gamma|$.  By $(5)$, the germ $(C_{\gamma},p)$ of $C_{\gamma}$ at $p$ satisfies
$$
i^*(C_{\gamma},p)=\mu_p\gamma_p=\mu_p i^*(C,p)
$$
for all $p\in|D|$. This is equivalent to that
\begin{eqnarray}
i^*(G_p/F_p^{\mu_p})\in \mathcal{O}_{D}^*,
\end{eqnarray}
where $F_p$ is any local equation of  $C$ at $p$. Lemma $2$ combined with $(7)$ implies that
\begin{eqnarray}
\qquad\quad d_D\log(i^*(G_p/F_p^{\mu_p}))=i^*(dG_p/G_p)-\mu_p i^*(dF_p/F_p)\in\Omega_{X}^1(\log B)\otimes \mathcal{O}_D.
\end{eqnarray}
Since $i^*(dF_p/F_p)\in \Omega_{X}^1(\log B)\otimes \mathcal{O}_{D}(C)$, 
it follows that $i^*(dG_p/G_p)$ belongs to $\Omega_{X}^1(\log B)\otimes \mathcal{O}_{D}(C)$.
\end{proof}
\vskip1mm
\noindent

\begin{lemm} 
There exists a unique rational form $\omega\in H^0(X,\Omega_X^1(\log(B)\otimes \mathcal{O}_X(C))$ such that $i^*(dg/g)=i^*(\omega)$.
\end{lemm}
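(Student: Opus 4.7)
The plan is to deduce both existence and uniqueness of $\omega$ from a cohomological long exact sequence together with a toric vanishing theorem.

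First, tensoring the structural sequence $0\to \mathcal{O}_X(-D)\to \mathcal{O}_X\to \mathcal{O}_D\to 0$ with the locally free sheaf $\Omega_X^1(\log B)\otimes\mathcal{O}_X(C)$ produces the short exact sequence
\begin{equation*}
0\to \Omega_X^1(\log B)\otimes \mathcal{O}_X(C-D)\to \Omega_X^1(\log B)\otimes\mathcal{O}_X(C)\stackrel{i^*}{\to} \Omega_X^1(\log B)\otimes\mathcal{O}_D(C)\to 0.
\end{equation*}
Taking its long exact sequence in cohomology, both the existence and the uniqueness of a global form $\omega$ restricting to the section $i^*(dg/g)\in H^0(X,\Omega_X^1(\log B)\otimes\mathcal{O}_D(C))$ supplied by Lemma $3$ will follow from the two vanishings
\begin{equation*}
H^0(X,\Omega_X^1(\log B)\otimes\mathcal{O}_X(C-D))=0 \quad\text{and}\quad H^1(X,\Omega_X^1(\log B)\otimes\mathcal{O}_X(C-D))=0,
\end{equation*}
which respectively make the restriction on global sections injective (uniqueness) and surjective (existence of a lift of $i^*(dg/g)$).

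Next, I reduce these to standard toric vanishings. Since the rational function $f$ on $X$ has principal divisor $\Div(f)=C-\Div_\infty(f)$, one has $C\sim \Div_\infty(f)$ and hence, for $D=2\Div_\infty(f)$, an isomorphism
\begin{equation*}
\mathcal{O}_X(C-D)\cong \mathcal{O}_X(-\Div_\infty(f)).
\end{equation*}
By hypothesis $(H_1)$ the polytope $N_f$ is $2$-dimensional, and the toric divisor $\Div_\infty(f)$ is the pullback of the ample polytope divisor on $X_{\Sigma_f}$ via the birational toric morphism $X\to X_{\Sigma_f}$ induced by the refinement $\Sigma\to\Sigma_f$. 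Consequently $\Div_\infty(f)$ is nef and big on $X$.

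Finally, I invoke the Bott--Danilov vanishing for the smooth complete toric surface $X$ with normal crossing boundary $B$: using the toric identity $\Omega_X^2(\log B)\cong \mathcal{O}_X$ and Serre duality, the vanishing $H^q(X,\Omega_X^p(\log B)\otimes L)=0$ for $L$ nef and $q>0$ yields its dual form
\begin{equation*}
H^q(X,\Omega_X^p(\log B)\otimes \mathcal{O}_X(-L))=0\quad\text{for all } q<n=2.
\end{equation*}
Applied with $p=1$ and $L=\Div_\infty(f)$, this kills the two obstruction groups and completes the proof. The main technical step is the verification that the sheaf $\Omega_X^1(\log B)\otimes \mathcal{O}_X(C-D)$ sits in the nef-and-big regime where the toric vanishing applies; once this positivity is established, the existence and uniqueness of $\omega$ follow formally from the short exact sequence above.
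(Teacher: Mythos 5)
Your plan is identical to the paper's: tensor the structural sequence of $D$ by $\Omega_X^1(\log B)\otimes\mathcal{O}_X(C)$, pass to the long exact cohomology sequence, and reduce existence and uniqueness of $\omega$ to vanishings of $H^0$ and $H^1$ of $\Omega_X^1(\log B)\otimes\mathcal{O}_X(-\Div_\infty(f))$. The linear-equivalence reduction $C-D\sim -\Div_\infty(f)$ and the observation that $\Div_\infty(f)$ is nef and big are also the paper's.

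The last step, however, does not go through as stated. Serre duality pairs with the canonical sheaf $K_X=\Omega_X^2\cong\mathcal{O}_X(-B)$, not with the log canonical sheaf $\Omega_X^2(\log B)\cong\mathcal{O}_X$; these differ by a twist by $\mathcal{O}_X(-B)$. Concretely, writing $\mathcal{E}:=\Omega_X^1(\log B)$ (rank two, $\det\mathcal{E}\cong\mathcal{O}_X$, hence $\mathcal{E}^\vee\cong\mathcal{E}$), Serre duality gives
$$
H^q\big(X,\mathcal{E}\otimes\mathcal{O}_X(-L)\big)^\vee\cong H^{2-q}\big(X,\mathcal{E}\otimes\mathcal{O}_X(L-B)\big),
$$
so to invoke the primal log Bott--Danilov vanishing on the right you would need $L-B$ nef. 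But for $L=\Div_\infty(f)=d_1D_1+\cdots+d_rD_r$ and $B=D_0+\cdots+D_{r+1}$ the divisor $L-B$ has coefficient $-1$ on $D_0$ and $D_{r+1}$, and nefness fails (or at least is not established). So the ``dual form'' of Bott--Danilov you quote does not follow by this route. The paper avoids the issue by invoking the stronger toric fact (Fulton) that $\Omega_X^1(\log B)\cong\mathcal{O}_X^{\oplus 2}$ is \emph{globally trivial}; this collapses the two obstruction groups to $H^0(X,\mathcal{O}_X(-C))$ and $H^1(X,\mathcal{O}_X(-C))$, the first of which is zero since $C$ is a nonzero effective divisor, and the second vanishes because $C$ is nef and big by the standard Kawamata--Viehweg/Grauert--Riemenschneider-type result cited from Lazarsfeld. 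Replacing your Serre-duality step with the explicit global triviality of $\Omega_X^1(\log B)$ followed by the nef-and-big vanishing repairs the argument and recovers exactly the paper's proof.
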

\vskip1mm
\noindent

\begin{proof}
By tensoring $(2)$ with the locally free sheaf $\Omega_X^1(\log B)\otimes \mathcal{O}_X(C)$ and by looking at the associated long exact cohomological sequence, we deduce from Lemma $3$ that it is enough to show that
\begin{eqnarray}
H^1(X,\Omega_X^1(\log B)\otimes \mathcal{O}_X(C)\otimes I_D)=0.
\end{eqnarray}
But $C-\Div_{\infty}(f)=\Div (f)$ being principal, multiplication by $f^2$ gives a global isomorphism 
$$
\mathcal{O}_X(C)\otimes I_D\simeq \mathcal{O}_X(-C).
$$
Moreover, we know by \cite{F:gnus} that the sheaf $\Omega_X^1(\log B)$ is globally trivial. Thus, there is a global isomorphism
$$
\Omega_X^1(\log B)\otimes \mathcal{O}_X(C)\otimes I_D\simeq \mathcal{O}_X(-C)\oplus \mathcal{O}_X(-C).
$$
Since $C$ has a non negative intersection with each irreducible toric divisor of $X$, the line bundle $\mathcal{O}_X(C)$ is numerically effective \cite{F:gnus}. Moreover, it is well known \cite{F:gnus} that 
we have equality
\begin{eqnarray}
H^0(X,\mathcal{O}_X(C))= \{t^m/f,\,\,m\in N_f\cap \zp^2\}.
\end{eqnarray}
By $(H_1)$, the Newton polytope $N_f$ has dimension $2$, and we deduce from $(10)$ that $\mathcal{O}_X(C)$ is big. It is a standard result that
$$
H^1(X,\mathcal{O}_X(-C))=0
$$
for any big and nef Cartier divisor $C$ on a smooth complete variety $X$ (see \cite{Laz:gnus}, Theorem $4.5$ for instance). 
Finally, $(9)$ holds and there exists $\omega$ as in Lemma $4$. Since $H^0(X,\mathcal{O}_X(-C))=0$, the previous reasonning implies that such an $\omega$ is unique. 
\end{proof}

Up to here, we can show that all previous lemmas would remain valid with the choice $D=\Div_{\infty}(f)+\partial X$ used in \cite{W:gnus}. The choice $D\ge 2\Div_{\infty}(f)$ appears to be essential in order to have
\vskip1mm
\noindent
\begin{lemm}
We have $d\omega=0$.
\end{lemm}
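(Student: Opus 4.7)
My plan is, first, to deduce $i^*(d\omega) = 0$ from the commutative diagrams of Lemmas $1$ and $4$; second, to exploit the precision $D = 2\Div_{\infty}(f)$ to localize $d\omega$ in a one-dimensional cohomology space; and third, to kill this last obstruction using the explicit global form of $\omega$ coming from the trivialization of $\Omega_X^1(\log B)$.

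I apply Lemma $1$ with the normal crossing divisor $B = \partial X$ and $F = C$. The top row gives $d\omega \in H^0(X, \Omega_X^2(B) \otimes \mathcal{O}_X(2C))$ and commutativity of the diagram yields $i^*(d\omega) = d_D(i^*\omega)$. By Lemma $4$, $i^*\omega = i^*(dg/g)$; applying the same commutative diagram to the closed form $dg/g$ (with $F = |C_\gamma|$, which intersects $D$ properly by construction) gives $d_D(i^*(dg/g)) = i^*(d(dg/g)) = 0$. Hence $i^*(d\omega) = 0$, and the cohomology long exact sequence attached to $(1)$ tensored by the locally free sheaf $\Omega_X^2(B) \otimes \mathcal{O}_X(2C)$ yields
$$
d\omega \in H^0(X, \Omega_X^2(B) \otimes \mathcal{O}_X(2C) \otimes I_D).
$$

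Since $\Div(f) = C - \Div_{\infty}(f)$ is principal and $D = 2\Div_{\infty}(f)$, multiplication by $f^2$ provides an isomorphism $\mathcal{O}_X(2C) \otimes I_D \simeq \mathcal{O}_X(2\Div(f)) \simeq \mathcal{O}_X$. Meanwhile $K_X \simeq \mathcal{O}_X(-B)$ on a smooth complete toric surface, so $\Omega_X^2(B) \simeq \mathcal{O}_X$ is globally trivialized by the invariant $2$-form $\Omega := dt_1 \wedge dt_2/(t_1 t_2)$. Thus the target space is one-dimensional and there exists $\mu \in \cp$ such that $d\omega = \mu\, \Omega/f^2$.

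To prove $\mu = 0$, I use the global trivialization $\Omega_X^1(\log B) \simeq \mathcal{O}_X^{\oplus 2}$ by the frame $(dt_1/t_1, dt_2/t_2)$ (see \cite{F:gnus}) together with $(10)$ to write
$$
\omega = \frac{A}{f}\frac{dt_1}{t_1} + \frac{B}{f}\frac{dt_2}{t_2}
$$
for some polynomials $A, B$ with Newton polytopes contained in $N_f$. A direct computation of $d\omega$ on the torus, compared with $\mu\, \Omega/f^2$, gives the polynomial identity
$$
t_1 f \partial_{t_1} B - t_1 B \partial_{t_1} f - t_2 f \partial_{t_2} A + t_2 A \partial_{t_2} f = \mu.
$$
Every term of the left-hand side is divisible by $t_1$ or $t_2$, so the left-hand side has zero constant term as a polynomial in $(t_1, t_2)$; this forces $\mu = 0$. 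The main obstacle is that after the sheaf-theoretic reduction the obstruction space remains one-dimensional rather than zero, so we must use this explicit polynomial computation. It is precisely the precision $D \geq 2\Div_{\infty}(f)$ that produces the principal twist $2C - D = 2\Div(f)$ needed to collapse the obstruction to one dimension; with a weaker $D$ the residual space would be much larger and the simple constant-term argument would break down.
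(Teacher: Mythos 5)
Your proof is correct and follows the paper's argument step for step: the reduction $i^*(d\omega)=0$ via Lemmas $1$ and $4$, the cohomology sequence giving $d\omega\in H^0(X,\Omega_X^2(B)\otimes\mathcal{O}_X(2C)\otimes I_D)$, the twist by $f^2$ and the triviality of $\Omega_X^2(B)$ reducing to a one-dimensional obstruction $d\omega=\mu\,\Omega/f^2$, and then the vanishing of $\mu$. The only cosmetic difference is in the final step: the paper invokes the general fact that an exact form has no residue at the origin, so $\mu/f(0)^2=\res_0(d\omega)=0$, while you unpack this same residue computation as an explicit polynomial identity $t_1 f\partial_{t_1}B-t_1 B\partial_{t_1}f-t_2 f\partial_{t_2}A+t_2 A\partial_{t_2}f=\mu$ whose left side has zero constant term. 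These are literally the same observation ($\res_0(d\omega)$ \emph{is} the constant term of that polynomial divided by $f(0)^2$), so this is the paper's proof made slightly more explicit, not a different route; you also spell out why $d_D(i^*(dg/g))=i^*(d(dg/g))$ applies, which the paper leaves implicit. (One small notational caution: your use of $B$ for a polynomial clashes with $B=X\setminus\mathbb{T}$.)
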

\vskip1mm
\noindent
\begin{proof}
Lemma $1$ applied with $F=C$ combined with Lemma $4$ gives
$$
i^*(d\omega)=d_D(i^*(\omega)) = d_D(i^*(dg/g))=i^*(d(dg/g))=0,
$$
where $i^*(d\omega)\in H^0(X,\Omega_X^2(B+2C)\otimes \mathcal{O}_D)$. By tensoring $(2)$ with $\Omega_X^2(B+2C)$,  and by using the associated long exact cohomological sequence, we deduce that
$$
d\omega \in H^0(X,\Omega_X^2(B+2C)\otimes I_D).
$$
Since $2C-D=\Div(f^2)$, it follows that
$
f^2 d \omega\in H^0(X,\Omega_X^2(B)).
$
By \cite{F:gnus}, the divisor $B=X\setminus \mathbb{T}$ is an anticanonical divisor of $X$ and there is an identification 
$$
H^0(X,\Omega_X^2(B))=\cp \frac{dt_1\land dt_2}{t_1 \,\,t_2}.
$$
Thus,
$$
d \omega=\frac{c}{f^2}\frac{dt_1\land dt_2}{t_1\,t_2}
$$
for some constant $c\in\cp$. This form being exact, its residue at zero vanishes. This forces
$
c/f^2(0)=res_0(d\omega)=0
$
(recall that $f(0)\ne 0$ by $(H_1)$) and $d\omega=0$.
\end{proof}

Here comes a theorem of Ruppert in the picture. 
\vskip1mm
\noindent
\begin{lemm}
There exists some constants $c_j, a_1, a_2\in\cp$ such that
$$
\omega=\sum_{j=1}^t c_j \frac{d\bar{q}_j}{\bar{q}_j}+a_1\frac{dt_1}{t_1}+a_2\frac{dt_2}{t_2},
$$
with $\bar{q}_1,\cdots, \bar{q}_t$ the irreducible absolute factors of $f$. 
\end{lemm}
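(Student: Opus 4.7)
The plan is to reduce to Ruppert's classical criterion for closed rational $1$-forms on the affine plane, and then extend the resulting decomposition to $X$ by density. I would first restrict $\omega$ to the affine plane $\cp^2=\Spec \cp[t_1,t_2]$, which is a torus-equivariant dense open subset of $X$ since the fan $\Sigma$ contains the cone generated by the canonical basis of $\rp^2$. By Lemma $4$, the form $\omega|_{\cp^2}$ is a closed rational $1$-form with at most simple poles along the hypersurface $\{t_1 t_2 f=0\}$: the logarithmic factor along $B$ yields simple poles along $B\cap \cp^2=\{t_1t_2=0\}$, and the factor $\mathcal{O}_X(C)$ yields a simple pole along $\{f=0\}$. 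Hypothesis $(H_1)$ forces $f(0)\ne 0$, so $f$ is coprime to $t_1 t_2$, and the polynomial $t_1 t_2 f$ is squarefree ($f$ being squarefree as $C$ is reduced).

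I would then invoke Ruppert's theorem \cite{Ruppert:gnus}: any closed rational $1$-form on $\cp^2$ with polar divisor bounded by the zero set of a squarefree polynomial $g$ is a $\cp$-linear combination of the logarithmic derivatives of the absolute irreducible factors of $g$. Applied to $g=t_1 t_2 f$, whose absolute irreducible factors are exactly $t_1$, $t_2$, and the $\bar{q}_j$'s, and combined with the closedness of $\omega$ established in Lemma $5$, this produces
\[
\omega|_{\cp^2}=a_1\frac{dt_1}{t_1}+a_2\frac{dt_2}{t_2}+\sum_{j=1}^t c_j\frac{d\bar{q}_j}{\bar{q}_j}
\]
for some constants $a_1, a_2, c_j\in \cp$. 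Since both sides are rational $1$-forms on $X$ whose restrictions to the dense open $\cp^2$ coincide, they coincide globally as rational forms on $X$, which is the statement of Lemma $6$.

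The main obstacle will be matching the precise hypotheses of Ruppert's theorem to our setting, since the classical formulation is often stated with specific numerator degree bounds adapted to $\pp^2$. One could either clear the denominator $t_1 t_2 f$, writing $\omega|_{\cp^2}=(P\,dt_1+Q\,dt_2)/(t_1 t_2 f)$ for some $P, Q\in \cp[t_1,t_2]$, and check that the resulting degree bounds lie in the range to which Ruppert's criterion applies (using the polar control from Lemmas $3$ and $4$); or bypass Ruppert entirely by a residue argument, namely by subtracting $c_j\,d\bar{q}_j/\bar{q}_j$ with $c_j:=\res_{\bar{C}_j}\omega$ (a constant by irreducibility of $\bar{C}_j$ and closedness of $\omega$) to obtain a closed form in $H^0(X,\Omega_X^1(\log B))$, which for a smooth complete toric surface is canonically identified with the character lattice tensored with $\cp$ and is therefore spanned by $dt_1/t_1$ and $dt_2/t_2$.
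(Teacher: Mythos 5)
Your primary route follows the paper's strategy (Ruppert), but the version of Ruppert's theorem you invoke is too strong. What the closedness of $\omega|_{\cp^2}$ and the structure of $H^1(\cp^2\setminus C_0)$ actually give is a decomposition
$\omega = \sum_j c_j\,d\bar q_j/\bar q_j + a_1\,dt_1/t_1 + a_2\,dt_2/t_2 + \omega'$
with $\omega'$ \emph{exact}; the exact part does not vanish for free (think of $\omega'=dh$ for a polynomial $h$, which is closed, has poles bounded by any hypersurface, and is not a combination of logarithmic derivatives). The paper kills $\omega'$ in two steps: first it uses the toric isomorphism $\zp^2\otimes\mathcal{O}_X\simeq\Omega_X^1(\log B)$ and equality (10) to write $\omega$ in the explicit form (11), whence $\omega'=(p_1\,dt_1+p_2\,dt_2)/(t_1t_2f)$ with $\deg p_i<\deg(t_1t_2f)$; second it invokes \cite{BC:gnus}, Proposition~3, which says that an exact rational $1$-form with a reduced denominator and numerators of strictly smaller degree must be zero. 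You correctly sense that degree bounds are the issue, but ``checking that the degree bounds lie in the applicable range'' is not by itself a proof that $\omega'=0$: that step is a genuine extra ingredient, not a hypothesis of Ruppert's theorem.

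Your second, residue-theoretic route is genuinely different from the paper's argument and, once one small gap is closed, gives a clean alternative that bypasses Ruppert entirely. The gap is this: Lemma~4 only places $\omega$ in $H^0(X,\Omega_X^1(\log B)\otimes\mathcal{O}_X(C))$, so at a generic point of $\bar C_j$ one has $\omega=(a\,dx+b\,dy)/x$ (a simple pole in the sense of $\Omega_X^1(C)$, not a priori logarithmic). To speak of the Poincaré residue $c_j=\res_{\bar C_j}\omega$ and to conclude that subtracting $c_j\,d\bar q_j/\bar q_j$ removes the pole along $\bar C_j$, you must first observe that $d\omega=0$ forces $b(0,y)=0$, i.e.\ that closedness upgrades $\omega$ to a section of $\Omega_X^1(\log(B+C))$. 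You invoke closedness only to justify constancy of the residue, but it is needed at this earlier step as well. With that added, the rest is correct: the difference lies in $H^0(X,\Omega_X^1(\log B))$, which by the trivialization $m\mapsto dt^m/t^m$ is the $2$-dimensional space spanned by $dt_1/t_1$ and $dt_2/t_2$, and the lemma follows.
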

\vskip1mm
\noindent
\begin{proof}
By \cite{F:gnus}, the map $m\mapsto dt^m/t^m$ gives an isomorphism
$
\zp^2\otimes \mathcal{O}_X \simeq\Omega_X^1(\log B).
$
We deduce that the map
\begin{eqnarray*}
H^0(X,\mathcal{O}_X(C))\oplus H^0(X,\mathcal{O}_X(C)) & \rightarrow & H^0(X,\Omega_X^1(\log B)\otimes \mathcal{O}_X(C))\\
(r_1,r_2)\quad & \mapsto & r_1 dt_1/t_1+r_2 dt_2/t_2
\end{eqnarray*}
is an isomorphism. It follows from $(10)$ that there exists (unique) polynomials $h_1,h_2$ such that
\begin{eqnarray}
\omega=\frac{h_1}{f}\frac{dt_1}{t_1}+\frac{h_2}{f}\frac{dt_2}{t_2},\quad N_{h_i}\subset N_f.
\end{eqnarray}
On the other hand, $\omega$ being closed by Lemma $5$, its restriction to $\cp^2$ defines an element of the first algebraic De Rham cohomology group $H^1(\cp^2\setminus C_0)$, where 
$$
C_0:=(C+B)\cap \cp^2=\{t_1t_2 f=0\}.
$$
By a theorem of Ruppert \cite{Ruppert:gnus}, it follows that there are uniquely determined constants $c_1,\ldots,c_t, a_1, a_2\in\cp$ and a unique \textit{exact} rational $1$-form $\omega'$ such that
$$
\omega-\omega'=\sum_{i=1}^t c_j \frac{d\bar{q}_j}{\bar{q}_j}+a_1\frac{dt_1}{t_1}+a_2\frac{dt_2}{t_2}.
$$ 
Since the right hand side  can be written as in $(11)$, we deduce that there are polynomials $p_1, p_2$ so that
$$
\omega'=\frac{p_1dt_1+p_2dt_2}{t_1t_2f},\quad \deg (p_i) < \deg (ft_1t_2),
$$
where $\deg(\cdot)$ stands for the total degree. Since the polynomial $t_1t_2f$ is reduced (by $(H_1)$ and $(H_2)$) and $\omega'$ is exact, it follows from \cite{BC:gnus}, Proposition 3 that $\omega'=0$. This gives the desired expression for $\omega$.
\end{proof}
\vskip1mm
\noindent

\begin{lemm}
Let $\bar{C}_j\in \Divisor(X)$ be the component of $C$ defined by the absolute factor $\bar{q}_j$. 
We have the relation $\gamma=\sum_{j=1}^t c_j i^*(\bar{C}_j)$. 
\end{lemm}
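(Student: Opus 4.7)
My plan is to compare the Ruppert expression for $\omega$ with the logarithmic derivative $dg/g$ via local residues at points of $\partial X$. Since the torus $\mathbb{T}$ has trivial Picard group, $g|_\mathbb{T}$ factors as
$$g|_\mathbb{T}=\beta\,t^m\prod_{j=1}^{t}\bar{q}_j^{n_j}\prod_k h_k^{l_k}$$
for some $\beta\in\cp^*$, $m\in\zp^2$, $n_j,l_k\in\zp$, and irreducible Laurent polynomials $h_k$ coprime to $f$. Hence
$$\frac{dg}{g}=\sum_j n_j\frac{d\bar{q}_j}{\bar{q}_j}+\sum_k l_k\frac{dh_k}{h_k}+m_1\frac{dt_1}{t_1}+m_2\frac{dt_2}{t_2}.$$

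Using the equality $i^*(dg/g)=i^*(\omega)$, I would carry out a pointwise residue comparison. By $(H_2)$, for each $j$ there is a generic $p\in\bar{C}_j\cap|D|$ at a smooth transverse point of $B$ that is avoided by every other component of $C$ and every $\bar{C}_{h_k}$. In local coordinates $(x,y)$ with $\{x=0\}$ the component of $D$ through $p$ and $\{y=0\}$ the local equation of $\bar{C}_j$, the coefficient of $dy/y$ at $p$ equals $c_j$ for $i^*(\omega)$ (only the term $c_j\,d\bar{q}_j/\bar{q}_j$ contributes to a $dy/y$ pole at $p$) and equals $n_j$ for $i^*(dg/g)$, yielding $c_j=n_j$. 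A similar comparison at a generic point $q\in\bar{C}_{h_k}\cap|D|$ disjoint from $C$ (when the intersection is nonempty) gives $l_k=0$; otherwise $\bar{C}_{h_k}\cap|D|=\emptyset$ and $i^*(\bar{C}_{h_k})=0$ trivially. In either case, $l_k\,i^*(\bar{C}_{h_k})=0$.

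Finally, I would decompose $C_\gamma=C_\gamma^{(1)}+C_\gamma^{\rm bdy}$, where $C_\gamma^{(1)}=\sum_j n_j \bar{C}_j+\sum_k l_k \bar{C}_{h_k}$ is the closure in $X$ of the torus part of $\Div(g)$ and $C_\gamma^{\rm bdy}$ is supported on $|B|\setminus|D|$. Applying $i^*$ gives
$$\gamma=i^*(C_\gamma)=\sum_j n_j\,i^*(\bar{C}_j)+\sum_k l_k\,i^*(\bar{C}_{h_k})+i^*(C_\gamma^{\rm bdy}).$$
The middle sum vanishes by the previous step. The first sum is supported on $\Gamma\cap|D|$, while $i^*(C_\gamma^{\rm bdy})$ is supported on toric fixed points of $|D|$ (intersections of components of $C_\gamma^{\rm bdy}$ with components of $|D|$); by $(H_2)$ these two loci are disjoint, and since $\gamma$ is itself supported on $\Gamma\cap|D|$, the fixed-point contribution must vanish. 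Thus $\gamma=\sum_j n_j\,i^*(\bar{C}_j)=\sum_j c_j\,i^*(\bar{C}_j)$. The main obstacle is justifying the local residue computation inside the twisted log sheaf $\Omega_X^1(\log B)\otimes\mathcal{O}_D(C)$, for which the generic-point choices enabled by $(H_2)$ (transversality, smoothness of $B$, and avoidance of toric fixed points) are essential.
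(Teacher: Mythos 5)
Your proposal takes a genuinely different route from the paper's, and there is a real gap in it. The paper never decomposes $g$ into irreducible factors. It works locally at each $p\in|\Gamma|$ with the local factorization $g_p=G_p/H_p$ ($G_p$ a local equation of $C_\gamma$, $H_p$ supported on $B$), and the only input it feeds in is the hypothesis $i^*(G_p/F_p^{\mu_p})\in\mathcal{O}_D^*$, i.e.\ that $C_\gamma$ and $\mu_p C$ cut the same Cartier divisor on $D$ near $p$ (equation $(7)$). Combined with Lemma~$2$ and the Ruppert expression for $\omega$, this yields $(\mu_p-c_j)\,i^*(dy/y)\in\Omega_X^1(\log B)\otimes\mathcal{O}_D$, and a point-residue argument with an auxiliary lift $\psi\in\Omega_X^2(B)$ forces $\mu_p=c_j$. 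Since $F_p$ is a local equation of $C$ itself, and not of any particular irreducible factor, the argument is insensitive to whichever other curves happen to pass through $p$.

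The gap in your version is the claim that by $(H_2)$ one may choose $p\in\bar{C}_j\cap|D|$ (and likewise $q\in\bar{C}_{h_k}\cap|D|$ disjoint from $C$) "avoided by every $\bar{C}_{h_k}$." Hypothesis $(H_2)$ controls how the absolute components of $C$ meet $\partial X$; it says nothing about the auxiliary curves $\bar{C}_{h_k}$ coming from the torus factorization of $g$, which is not a priori reduced or related to $f$ in any controlled way. Some $\bar{C}_{h_k}$ could pass through every point of $\bar{C}_j\cap|D|$, and even be tangent to $\bar{C}_j$ to order $\ge 2d_i$ there, in which case the restrictions $\bar{C}_{h_k}|_D$ and $\bar{C}_j|_D$ coincide near $p$ and your "coefficient of $dy/y$" at $p$ picks up $n_j+l_k$, not $n_j$. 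Similarly, if $\bar{C}_{h_k}\cap|D|\subset\Gamma$ you have no generic $q$ disjoint from $C$ to work with. One also has to be careful with what "coefficient of $dy/y$ at $p$" means for sections of $\Omega_X^1(\log B)\otimes\mathcal{O}_D(C)$ on the non-reduced scheme $D$; the paper handles this by wedging with $dx/x$, lifting to a form in $\Omega_X^2(B)\otimes\mathcal{O}_X(C)\otimes I_D$ plus a correction $\psi\in\Omega_X^2(B)$, and taking a genuine Grothendieck point residue. The last part of your argument — killing $i^*(C_\gamma^{\mathrm{bdy}})$ by the disjointness of $\Gamma$ from the toric fixed points — is correct and is close in spirit to how the paper uses $|\Div_\infty(f)|=|\partial X|$.
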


\vskip1mm
\noindent

\begin{proof}
Let $p\in|\Gamma|$ and let $x=0$ and $y=0$ be some respective local equations of $C$ and $B$ at $p$. By Lemma $6$, the germ $\omega_p$ of $\omega$ at $p$ satisfies
$\omega_p-c_j\,dy/y\in\Omega_{X}^1(\log B)$, where $\bar{C}_j$ is the unique component of $C$ passing throw $p$ (unicity of $\bar{C}_j$ comes from $(H_2)$). It follows that
$$
i^*(\omega_p)-c_j i^*(dy/y)\in\Omega_{X}^1(\log B)\otimes \mathcal{O}_D,
$$
while $(6)$ combined with $(8)$ implies that
$$
i^*(dg_p/g_p)-\mu_p i^*(dy/y)\in \Omega_{X}^1(\log B)\otimes \mathcal{O}_D.
$$
Using equality $i^*(dg_p/g_p)=i^*(\omega_p)$ induced by Lemma $4$, we deduce that
$$
(\mu_p-c_j)i^*(dy/y)\in \Omega_{X}^1(\log B)\otimes \mathcal{O}_D,
$$
so that
$$
(\mu_p-c_j)i^*(dx\land dy/xy)\in \Omega_{X}^2(B)\otimes \mathcal{O}_D.
$$
Hence, there exists $\psi\in \Omega_{X}^2(B)$ such that 
$$
(\mu_p-c_j)dx\land dy/xy-\psi \in \Omega_{X}^2(B)\otimes \mathcal{O}_X(C)\otimes I_D.
$$
Since $|\Div_{\infty}(f)|=|\partial X|$ (see Subsection $3.1$), we deduce that $(D,p)\ge (B,p)$ for any $p\in |\Gamma|$. It follows that the previous germ of $2$-form has its polar divisor bounded by the smooth germ of curve $(C,p)=\{y=0\}$. Hence it has no residue at $p$
$$
\res_p\big[(\mu_p-c_j)dx\land dy/xy-\psi\big]=0.
$$
In the same way, $\psi\in \Omega_{X}^2(B)$  forces $\res_p(\psi)=0$ so that
$$
0=\res_p\big[(\mu_p-c_j)dx\land dy/xy\big]=\mu_p-c_j.
$$
The relation  $\gamma=\sum_{j=1}^t c_ji^*(\bar{C}_j)$ follows. 
\end{proof}

\vskip0mm
\noindent
\begin{lemm}
$\gamma$ is $\zp$-combination of the $\gamma_j$'s. 
\end{lemm}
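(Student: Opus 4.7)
The plan is to start from the expression $\gamma=\sum_{j=1}^t c_j\, i^*(\bar{C}_j)$ obtained in Lemma $7$, and to refine the information on the coefficients $c_j\in\cp$ in two successive steps: first show that they are Galois-equivariant, so that the decomposition in fact descends to a $\kp$-linear combination of the rational pieces $\gamma_j=i^*(C_j)$, and then use the integrality of $\gamma$ in the basis $(\gamma_P)_{P\in\mca P}$ to upgrade ``$\kp$-linear'' to ``$\zp$-linear''.

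For the Galois descent, I would argue as follows. The form $\omega$ produced in Lemma $4$ is uniquely determined by $g$ (via $i^*(dg/g)=i^*(\omega)$ and the vanishing $H^0(X,\mco_X(-C))=0$ proved inside Lemma $4$), and $g$ itself may be chosen in $\kp(X)$ since $\gamma\in V_{\zp}(D)$ is defined over $\kp$ and the torus has trivial Picard group already over $\kp$. Hence $\omega$ is defined over $\kp$. On the other hand, the decomposition of $\omega$ given by Lemma $6$ is unique, so for any $\sigma\in\Gal(\bar{\kp}/\kp)$, applying $\sigma$ to
$$
\omega=\sum_{j=1}^t c_j\frac{d\bar{q}_j}{\bar{q}_j}+a_1\frac{dt_1}{t_1}+a_2\frac{dt_2}{t_2}
$$
and matching terms forces $c_{\sigma(j)}=c_j$ and $a_i\in\kp$, where $\sigma$ acts on $\{1,\ldots,t\}$ through its action on the absolute factors $\bar{q}_j$. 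The $\sigma$-orbits on $\{1,\ldots,t\}$ are exactly the sets of indices of absolute factors grouped into a given rational factor of $f$; writing $C_i=\bigcup_{j\in O_i}\bar{C}_j$ for the corresponding Galois orbits, we obtain
$$
\gamma=\sum_{i=1}^s c'_i\, i^*(C_i)=\sum_{i=1}^s c'_i\,\gamma_i,\qquad c'_i\in\kp.
$$

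For the integrality step, I would use the orthogonality of the $\gamma_j$'s in the basis $(\gamma_P)_{P\in\mca P}$ provided by hypothesis $(H_2)$: the set $\mca P$ is partitioned into subsets $S_1,\ldots,S_s$ so that $\gamma_i=\sum_{P\in S_i}\gamma_P$, and the supports $S_i$ are pairwise disjoint. Therefore the coordinates of $\gamma=\sum_i c'_i\gamma_i$ in the basis $(\gamma_P)$ are constant on each $S_i$, equal to $c'_i$. Since $\gamma\in V_{\zp}$ by assumption, every such coordinate lies in $\zp$, so $c'_i\in\zp$ and $\gamma=\sum_{i=1}^s c'_i\gamma_i$ is the desired $\zp$-linear combination.

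The main obstacle, and the only step requiring care, is the Galois descent: one must ensure that $\omega$, and not only the datum $\gamma$, is actually defined over $\kp$, and then invoke the uniqueness statement hidden inside Lemma $6$ (which comes from the fact that Ruppert's decomposition of a closed rational $1$-form into logarithmic pieces plus an exact remainder is unique). Once this is granted, the remaining bookkeeping is formal, and the integrality conclusion follows at once from the orthogonal partition of $\mca P$ furnished by $(H_2)$.
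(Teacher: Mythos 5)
Your overall strategy (Galois descent for the coefficients, then integrality from the disjoint supports of the $\gamma_j$'s furnished by $(H_2)$) is the same as the paper's, but you apply the Galois action at the wrong level, and that introduces a gap. You want to descend by applying $\sigma\in\Gal(\bar\kp/\kp)$ to the Ruppert decomposition of $\omega$ and then invoking its uniqueness, which requires knowing that $\omega$ itself is defined over $\kp$. Your justification for this --- that $g$ may be chosen in $\kp(X)$ because $\gamma$ is $\kp$-rational and $\Pic(\mathbb T)$ is trivial over $\kp$ --- does not actually deliver what you need. The crux is that $g$ comes from an extension $C_\gamma\in\Divisor(X)$ of $\gamma$, and such an extension is not unique; that $\gamma$ is $\kp$-rational does not immediately give you a $\kp$-rational $C_\gamma$ (Galois permutes the set of lifts but you cannot simply average Cartier divisors). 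The remark about $\Pic(\mathbb T)$ only tells you how to write a $\kp$-rational divisor as $\Div(g)$ once you have one; it does not supply one.

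The paper avoids this entirely by acting with $\Aut_\kp(X)$ directly on the divisor-level identity $\gamma=\sum_{j=1}^t c_j\, i^*(\bar C_j)$ established in Lemma $7$. Since $\gamma$ and $D$ are defined over $\kp$, $\sigma$ fixes $\gamma$ and permutes the $\bar C_j$'s, so
\[
\sum_{j} c_j\, i^*(\bar C_j)=\gamma=\sigma(\gamma)=\sum_{j} c_j\, i^*(\sigma(\bar C_j));
\]
the $i^*(\bar C_j)$'s have pairwise disjoint support by $(H_2)$, so the decomposition is unique and conjugate components receive equal coefficients, with no need to discuss rationality of $\omega$ or of $g$. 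If you drop the passage through $\omega$ and apply Galois to this divisor identity instead --- whose uniqueness you already invoke implicitly in your final integrality paragraph --- your argument closes cleanly and coincides with the paper's. The integrality step at the end (constancy of the coordinates of $\gamma\in V_\zp$ over each block $S_i$ of the partition of $\mathcal P$) is correct and is also what the paper uses, though it states it only implicitly.
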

\vskip0mm
\noindent
\begin{proof}
Let $\bar{C}_j$ and $\bar{C}_k$ be conjugate components. We need to show that $c_j=c_k$. Let us consider the schemes $X$, $D$ , $\bar{C}_j$ and $\gamma$ as schemes over $\Spec \bar{\kp}$ (we keep the same notations for simplicity). Since both $D$ and $\gamma$ are defined over $\kp$, the group  $\Aut_{\kp}(X)$ of $\kp$-automorphisms of $X$ acts on $\Divisor(D)$ and fix $\gamma$.  Let $\sigma\in \Aut_{\kp}(X)$ be such that $\sigma(\bar{C}_j)=\bar{C}_k$. Then 
\begin{eqnarray}
c_k i^*(\bar{C}_k)+\sum_{i\ne k}c_i i^*(\bar{C}_i)=\gamma= \sigma(\gamma)=c_j i^*(\bar{C}_k)+\sum_{i\ne j}c_i \sigma(i^*(\bar{C}_i)).
\end{eqnarray}
Since $\sigma$ induces a permutation of the irreducible absolute components of the rational curve $C$, $(12)$ implies that $(c_j-c_k)i^*(\bar{C}_k)$ is supported on $\sum_{i\ne k}c_i i^*(\bar{C}_i)$. Since by the hypothesis $(H_2)$, the schemes $i^*(\bar{C}_k)$ and $\sum_{i\ne k}i^*(\bar{C}_i)$ have disjoint support, this forces equality $c_j=c_k$.
\end{proof}

This shows that $V(D)=\langle \gamma_1,\ldots,\gamma_s\rangle$. Since the $\gamma_j$'s have  coordinates in $\{0,1\}$ and are pairwise orthogonal in the canonical basis $(\gamma_P)_{P\in\mathcal{P}}$ of the ambient space $V$, they form (under some unique permutation) the reduced echelon basis of $V(D)$. Obviously, this remains true for any choice $D\ge 2\Div_{\infty}(f)$. Theorem $2$ is proved.
$\hfill{\square}$

\vskip3mm
\noindent

\begin{rema}
If we rather consider the $\zp$-module $\bar{V}_{\zp}$ induced by the irreducible decomposition of $\gamma_C=C\cap D$ over the algebraic closure $\bar{\kp}$ of $\kp$, we can check that Theorem $2$ remains valid. Namely, if $D\ge 2\Div_{\infty}(f)$, then the submodule $\bar{V}_{\zp}(D)\subset \bar{V}_{\zp}$ of divisors that extend to $X$ is free generated by the restrictions of the irreducible absolute components of $C$. This might be usefull to compute the absolute factorization of $f$.
\end{rema}

\section{A toric factorization algorithm} 

We come now to the proof of Theorem $1$. We introduce notations and review basic facts about toric geometry in Subsection $3.1$. We describe the echelon basis computation in Subsection $3.2$ and the factors computation in Subsection $3.3$. We develop a toric factorization algorithm and prove Theorem $1$ in Subsection $3.4$. 

\subsection{Notations and preliminaries.} We refer to \cite{Danilov:gnus} and \cite{F:gnus} for an introduction to toric geometry. As before, $X$ designs the smooth toric surface associated to a regular fan $\Sigma$ that refines the normal fan $\Sigma_f$ of $N_f$. 

We denote by $D_0,\ldots,D_{r+1}$ the irreducible toric divisors of $X$ and by $\rho_0,\ldots,\rho_{r+1}$ the corresponding rays of $\Sigma$. Since $\Sigma$ is regular, we can order the $D_i$'s in such a way that the generators $\eta_i$ of the monoids $\rho_i\cap \zp^2$ satisfy 
$
\det(\eta_{i},\eta_{i+1})=1,
$
with the convention $\eta_{r+2}=\eta_{0}$. 
We denote by $U_i\simeq \cp^2$
the affine toric chart associated to the two-dimensional cone $\rho_{i}\rp^+\oplus \rho_{i+1}\rp^+$. Thus, 
$$
U_i= \Spec\,\cp[x,y]
$$
where affine coordinates and torus coordinates $t=(t_1,t_2)$ are related by relations
$$
t^m=x^{\langle m,\eta_i \rangle} y^{\langle m,\eta_{i+1}\rangle}
$$
for all $m\in\zp^2$, with $\langle\cdot,\cdot\rangle$ the standard scalar product (see \cite{Danilov:gnus} for instance). Moreover, the irreducible toric divisors have affine equations 
$$
D_{i}\cap U_i=\{x=0\}\quad{\rm and}\quad D_{i+1}\cap U_i=\{y=0\},
$$ 
and $|D_j|\cap U_i=\emptyset$ for $j\notin\{ i,i+1\}$. 

\vskip2mm
\noindent

Since the fan $\Sigma$ contains the cone generated by the canonical basis $(e_1,e_2)$ of $\rp^2$, we can chose the indexation such that 
$
(\eta_{r+1},\eta_0)=(e_1,e_2)
$, in which case
$$
\partial X=D_1+\cdots+D_r.
$$
It is well known that the toric divisor $D_i$ appears in the principal divisor $\Div(f)$ with multiplicity 
$$
d_i:=-min_{m\in N_f} \langle m,\eta_i\rangle.
$$ 
By $(H_1)$, we  have $d_0=d_{r+1}=0$, and $d_i>0$ otherwise, so that
$$
\Div_{\infty}(f)=d_1D_1\cdots+d_rD_r
$$
for some $d_1,\ldots,d_r \ge 1$. In particular, $|\Div_{\infty}(f)|=|\partial X|$ as asserted in the proof of Lemma $7$.
Since $f=\sum_{m\in N_f \cap \zp^2} c_m t^m$, the rational function 
\begin{eqnarray}
f_i(x,y):=\sum_{m\in N_f \cap \zp^2} c_m x^{\langle m,\eta_i\rangle+d_i}y^{\langle m,\eta_{i+1}\rangle+d_{i+1}}
\end{eqnarray}
is a polynomial which does not vanish at $(0,0)$, and $C$ has local  equation
$$
C\cap U_i =\{f_i(x,y)=0\}
$$
in the chart $U_i$. We call the univariate polynomial $P_i(y):=f_i(0,y)$ the $i^{th}$-facet polynomial of $f$, or exterior facet polynomial of $f$ when $i\ne 0,r+1$. The polynomial $P_i$ has degree
$$
l_i:=\deg(P_i)=\Card(N_f^{(i)}\cap\zp^2)-1,
$$
the lattice length of the $i^{th}$ exterior face
$$
N_f^{(i)}:=\{m\in N_f,\,\langle m,\eta_{i}\rangle=-d_i\}
$$
of $N_f$. If $\rho_i$ belongs to the original normal fan $\Sigma_f$ of $N_f$, this face has dimension one. Otherwise, it is a vertex of $N_f$.
\vskip2mm
\noindent

A point $p\in C\cap D_i$ has local coordinates $(0,y_p)$ in $U_i$,  where $y_p\in\bar{\kp}$ is a root of $P_i$. Under the hypothesis $(H_2)$, the curve $C$ intersects transversally the boundary of $X$ so that there exists a unique series $\phi_{p}\in \bar{\kp}[[x]]$ such that $\phi_p(0)=y_p$ and $f_i(x,\phi_{p})\equiv 0$. The restriction $\gamma_p$  of the germ of $C$ at $p$ to an effective toric divisor $D=\sum k_iD_i$ is thus uniquely determined by the truncation at order $k_i$ of the series $\phi_p$. Since the $\phi_p$'s are conjugate when the $y_p$'s run over the roots of an irreducible rational factor $P$ of $P_i$, the Cartier divisor of $D$
$$
\gamma_P:=\sum_{P(y_p)=0} \gamma_p 
$$
is defined and irreducible over $\kp$. It follows that the restriction $\gamma_C$ of $C$ to $D$ admits the irreducible \textit{rational} decomposition   
$$
\gamma_C=\sum_{P\in\mathcal{P}}\gamma_P,
$$
where $\mathcal{P}:=\mathcal{P}_1\cup\cdots\cup\mathcal{P}_r$ is the union of the sets $\mathcal{P}_i$ of the non constant monic irreducible rational factors of the $i^{th}$ exterior facet polynomial of $f$.

\subsection{Computing the reduced echelon basis. Proof of Theorem $2$.}

We now give a way to compute the reduced echelon basis of $V(D)\subset V$ when
$D=2\Div_{\infty}(f)$. So we need criterions for lifting Cartier divisors from $D$ to $X$. In \cite{W:gnus}, the author obtain such conditions that involves the algebraic coefficients of the series $\phi_p$'s. From an effective point of view, we rather follow \cite{CL:gnus} and work over the residue field
$$
\kp_P:=\kp[y]/(P(y))
$$
associated to each $P\in \mathcal{P}$. Let $y_P$ be the residual class of $y$. A series $\B\in\kp_P[[x]]$ can be uniquely written
$$
\B=\sum_{j=0}^{l_p-1} b_j y_P^j
$$
where $l_P:=\deg(P)$ and $b_j:=\coef(\B,y_P^j) \in\kp[[x]]$. For convenience, we denote by $\coef_k(\B,y_P^j)\in\kp$ the coefficient of $x^k$ in $b_j$. 
\vskip2mm
\noindent

Suppose that $P\in \mathcal{P}_i$. Under the hypothesis $(H_2)$, there exists a unique power series $\phi_P\in\kp_P[[x]]$ such that $f_i(x,\phi_P)\equiv 0$ and $\phi_P(0)=y_P$. So $\phi_P$ is the conjugate class of the series $\phi_p$ associated to the roots of $P$. Note that $\phi_P$ is invertible. For all integer $k\ne 0$, we define
$$
\B^k(\phi_P):=\frac{\phi_P^{k}}{k}\in \kp_P[[x]],
$$
and we define $\B^0(\phi_P):=\log(\phi_P)$ to be the unique primitive of $\phi_P'/\phi_P$ which vanishes at $0$ . For all $m\in\zp^2$, we define
$$
a_{Pm}:=\sum_{j=0}^{l_P-1} \Tr^j(P)\coef_{-\langle m,\eta_{i}\rangle}(\B^{\langle m,\eta_{i+1}\rangle}(\phi_P),y_P^j),
$$
where $i$ is chosen so that $P\in\mathcal{P}_i$ and where $\Tr^j(P)$ designs the sum of the $j^{th}$ power of the roots of $P$. So $a_{Pm}\in\kp$.
Theorem $3$ follows from the following
\vskip3mm
\noindent
\begin{prop}
Let $M$ denote the set of interior lattice points of $2N_f$ and let $A$ denote the matrix $(a_{Pm})_{P\in \mathcal{P}, \,m\in M}$. There is equality
$V(D)=\ker(A)$.
\end{prop}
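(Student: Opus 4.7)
The plan is to identify the matrix $A$, up to sign, with the matrix of the morphism $\Psi: V \to H^0(X, \Omega_X^2(D))^{\vee}$ from \cite{W:gnus} recalled in the introduction, and then to invoke the criterion $\gamma \in V(D) \Leftrightarrow \Psi(\gamma) = 0$. The proof therefore splits into: (i) exhibit an explicit basis of $H^0(X,\Omega_X^2(D))$ indexed by $M$; (ii) show that each entry $a_{Pm}$ is, up to sign, the value of $\Psi(\gamma_P)$ on the corresponding basis element; (iii) conclude by linear algebra.

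For (i), since $B = X\setminus\mathbb{T}$ is anticanonical \cite{F:gnus}, the rational 2-form $\frac{dt_1 \wedge dt_2}{t_1 t_2}$ trivializes $\Omega_X^2(B)$ and provides an isomorphism $\Omega_X^2(D) \cong \mathcal{O}_X(D-B)$. The standard toric formula for global sections of a line bundle then identifies $H^0(X,\Omega_X^2(D))$ with the $\kp$-span of the 2-forms $\omega_m := t^m \cdot \frac{dt_1 \wedge dt_2}{t_1 t_2}$ for $m$ satisfying $\langle m,\eta_i\rangle \geq 1-2d_i$ for $i=1,\dots,r$ and $m_1,m_2 \geq 1$, which is precisely the condition that $m$ is an interior lattice point of $2N_f$.

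For (ii), fix $P \in \mathcal{P}_i$ and expand $\omega_m$ in the chart $U_i$ using $t^m = x^{\langle m,\eta_i\rangle} y^{\langle m,\eta_{i+1}\rangle}$ and $\frac{dt_1 \wedge dt_2}{t_1 t_2} = \frac{dx \wedge dy}{xy}$, which gives $\omega_m = x^{a} y^{k-1}\,dx \wedge dy$ with $a := \langle m,\eta_i\rangle - 1$ and $k := \langle m,\eta_{i+1}\rangle$. A direct local primitive of $\omega_m$ is $\eta := -\frac{1}{k}\, x^a y^k\,dx$ (replaced by $-x^a \log(y)\,dx$ when $k = 0$). Restricting $\eta$ along the formal lifting curve $y = \phi_p(x)$ through $p = (0,y_p)$ yields the 1-form $-x^a \B^k(\phi_p)\,dx$, and its residue at $x = 0$ equals $-\coef_{-\langle m,\eta_i\rangle}(\B^{\langle m,\eta_{i+1}\rangle}(\phi_p))$. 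Summing this over the Galois orbit $\{p : P(y_p) = 0\}$ converts a $\bar\kp$-computation into $-\Tr_{\kp_P/\kp}(\coef_{-\langle m,\eta_i\rangle}(\B^{\langle m,\eta_{i+1}\rangle}(\phi_P)))$; expanding in the $\kp$-basis $1, y_P,\dots,y_P^{l_P-1}$ of $\kp_P$ and using $\Tr_{\kp_P/\kp}(y_P^j) = \Tr^j(P)$ gives exactly $-a_{Pm}$. Thus $A = -\Psi|_V$ in the bases $(\gamma_P)_{P \in \mathcal{P}}$ of $V$ and $(\omega_m^{\vee})_{m \in M}$ of $H^0(X,\Omega_X^2(D))^\vee$, and (iii) now follows immediately: $V(D) = V \cap \ker \Psi = \ker A$.

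The main obstacle will be step (ii), and specifically the match between my ad hoc local primitive and the one implicit in the definition of $\Psi$ from \cite{W:gnus}. Independence from the choice of primitive should follow because the difference of two primitives is a closed 1-form whose pullback along the lifting curve has zero residue, but this must be made precise; one must also handle the corner case where $P = y$, so that $y_p = 0$ and the lifting curve passes through the toric corner $D_i \cap D_{i+1}$, by either working in the adjacent chart $U_{i+1}$ or by a direct verification of the formula at the corner. Once the local residue formula in step (ii) is justified, the Galois descent is purely formal via Newton's identities.
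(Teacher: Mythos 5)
Your proposal follows essentially the same route as the paper's proof: both invoke the osculation criterion $\Psi$ from \cite{W:gnus}, identify $H^0(X,\Omega_X^2(D))$ with the span of the $\psi_m = t^m\,dt_1\wedge dt_2/(t_1t_2)$ for $m$ in the interior of $2N_f$, compute the pairing entries via residues of a primitive along the lifting curve, and descend from $\bar\kp$ to $\kp$ via the power sums $\Tr^j(P)$. The only real difference is that the paper cites the explicit formula $\langle\gamma_p,\psi_m\rangle_D=\coef(\B^{\langle m,\eta_{i+1}\rangle}(\phi_p),x^{-\langle m,\eta_i\rangle})$ directly from Proposition~1 of \cite{W:gnus}, which sidesteps the loose end you flag about matching your ad hoc primitive with the one underlying $\Psi$; also note that the corner case $y_p=0$ you worry about cannot occur, since $f_i(0,0)\neq 0$ by $(H_1)$ so $y$ never divides $P_i$.
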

\vskip3mm
\noindent

\begin{proof}
By the algebraic osculation Theorem $1$ in \cite{W:gnus}, we know that there exists a pairing 
$$
\langle \cdot,\cdot \rangle_D : \Divisor(D)\otimes\cp\times H^0(X,\Omega_{X}^2(D))\rightarrow  \cp
$$
such that $\gamma \in \Divisor(D)$ extends to $X$ if and only if $\langle \gamma,\cdot \rangle_D\equiv 0$. Since $D=2\Div_{\infty}(f)$, we have 
$$
H^0(X,\Omega_{X}^2(D))=\bigoplus_{m\in M} \cp \,\psi_m,\quad \psi_m:=t^m\frac{dt_1\land dt_2}{t_1t_2}
$$
and the explicit formula of Proposition $1$ in \cite{W:gnus} gives equality
$$
\langle \gamma_p,\psi_m \rangle_D=\coef(\B^{\langle m,\eta_{i+1}\rangle}(\phi_{p}),x^{-\langle m,\eta_{i}\rangle})
$$
for all $p\in C\cap D_i$. Note that $-\langle m,\eta_{i}\rangle< 2d_i$ by hypothesis, so that the previous expression only depends on $\phi_p$ modulo $(x^{2d_i})$. 
\vskip2mm
\noindent

Suppose that $y_p$ is a root of $P\in \mathcal{P}_i$. Since $\phi_P$ is the conjugate class of $\phi_p$, there exists for all $k\in\zp$ a unique polynomial $\R^k\in \kp[x][y]$ with degree $<2d_i$ in $x$ and degree $<l_P$ in $y$ such that 
$$
\B^k(\phi_P)\equiv \R^k(y_P)\,\,{\rm mod}\,(x^{2d_i})\quad {\rm and}\quad \B^k(\phi_p)\equiv \R^k(y_p)\,\,{\rm mod}\,(x^{2d_i}).
$$
Hence, we have congruence relations
\begin{eqnarray*}
\sum_{P(y_p)=0} \B^k(\phi_p) &\equiv &\sum_{P(y_p)=0} \sum_{j=0}^{l_P-1} \coef(\R^k,y^{j})y_p^{j}\equiv\sum_{j=0}^{l_P-1} \Tr^j(P)\coef(\B^k(\phi_P),y_P^{j})
\end{eqnarray*}
modulo $(x^{2d_i})$ and the relation
$$
\langle \gamma_P,\psi_m \rangle_D=\sum_{P(y_p)=0}\langle \gamma_p,\psi_m \rangle_D=a_{Pm}
$$
follows. So $\gamma\in V_{\zp}$ extends to a Cartier divisor on $X$ if and only if $\gamma A=0$. It follows that $V(D)=\ker(A)$.
\end{proof}

\vskip0mm
\noindent

Let us look at the algorithmic complexity underlying Proposition $1$. 
Following \cite{GG:gnus}, we use notation $\wt{\mathcal{O}}$ for soft $\mathcal{O}$,  and we let $2<\omega< 2.34$ be the matrix multiplication exponent. We recall that the complexity for multiplying two polynomials of degree $d$ belongs to $\wt{\mathcal{O}}(d)$ (Sch\"{o}nhage and Strassen algorithm,  \cite{GG:gnus}). 
\vskip2mm
\noindent
\begin{coro}
Suppose given the rational factorization of the exterior facet polynomials of $f$. We can build the matrix $A$ with 
$\wt{\mathcal{O}}(\Vol(N_f)^2)$ arithmetic operations in $\kp$ and then compute the reduced echelon basis of $V(D)$ with 
$\mathcal{O}(\Vol(N_f)\Card(\mathcal{P})^{\omega-1})$ arithmetic operations in $\kp$. 
\end{coro}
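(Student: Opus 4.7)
The plan is to treat the two bounds independently: first the cost of building $A$, then the cost of reducing it. Throughout I use that arithmetic in the residue field $\kp_P = \kp[y]/(P(y))$ costs $\wt{\mathcal{O}}(l_P)$ operations in $\kp$ via Sch\"{o}nhage--Strassen, and that $|M| = \mathcal{O}(\Vol(N_f))$ since $M \subset 2N_f \cap \zp^2$.

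For the construction of $A$, I would proceed facet by facet. Fix $i$ and $P \in \mathcal{P}_i$. Because $\partial_y f_i(0, y_P) = P'(y_P)$ is a unit of $\kp_P$ under $(H_2)$, a standard Newton iteration applied to the equation $f_i(x, \cdot) = 0$ at the approximate root $y_P$ produces the truncation of $\phi_P$ modulo $x^{2d_i}$ in $\wt{\mathcal{O}}(d_i)$ operations over $\kp_P$, i.e.\ in $\wt{\mathcal{O}}(d_i l_P)$ operations over $\kp$. The entries $a_{Pm}$ additionally require, for each $m \in M$, the truncated series $\B^K(\phi_P) \bmod x^{2d_i}$ with $K = \langle m, \eta_{i+1}\rangle$; these are generated recursively from $\phi_P$ and $\phi_P^{-1}$ by iterated multiplication (one formal antiderivative being needed to handle the $K=0$ case $\B^0 = \log \phi_P$), each iterate costing $\wt{\mathcal{O}}(d_i)$ operations over $\kp_P$. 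The power sums $\Tr^j(P)$ for $0 \le j < l_P$ are computed from the coefficients of $P$ by Newton's identities in $\wt{\mathcal{O}}(l_P)$ operations in $\kp$. Finally the scalar $a_{Pm}$ is assembled as an $l_P$-term linear combination of coefficients already at hand. Summing over $P$ and $i$, and using the classical bound $\sum_i d_i l_i = \mathcal{O}(\Vol(N_f))$ together with the fact that the range of relevant exponents $K$ for $P \in \mathcal{P}_i$ is bounded by the $\eta_{i+1}$-width of $2N_f$, which is $\mathcal{O}(\Vol(N_f))$, gives a total cost in $\wt{\mathcal{O}}(\Vol(N_f)^2)$.

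For the echelon basis step, Proposition $1$ identifies $V(D)$ with the right kernel of $A \in \kp^{|\mathcal{P}| \times |M|}$, and since the rows of $A$ are indexed by the basis $(\gamma_P)_{P \in \mathcal{P}}$ of $V$, the reduced echelon basis of $V(D)$ is precisely the reduced row echelon form of $\ker A$. With $|\mathcal{P}| \le |M| = \mathcal{O}(\Vol(N_f))$, the fast rectangular elimination routine of Storjohann \cite{Sto:gnus} produces this basis in $\mathcal{O}(|M| \cdot |\mathcal{P}|^{\omega - 1}) = \mathcal{O}(\Vol(N_f) \cdot \Card(\mathcal{P})^{\omega - 1})$ operations in $\kp$. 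The main obstacle is the bookkeeping in the matrix-construction step: one must accurately count the number of distinct exponents $K$ that arise as $m$ varies in $M$ (using $M \subset 2N_f \cap \zp^2$ and the primitivity of each $\eta_i$), and amortise the cost of the iterated $\phi_P$-multiplications against the Newton-polytope invariants $d_i$ and $l_i$ so that the polytope identity $\sum_i d_i l_i = \mathcal{O}(\Vol(N_f))$ actually delivers the claimed $\wt{\mathcal{O}}(\Vol(N_f)^2)$ total. Once this is done, the kernel step is a direct invocation of standard fast linear algebra.
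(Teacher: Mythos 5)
There is a genuine gap, and it sits exactly where you flag it as ``bookkeeping'' and then leave undone. Your claim that Newton iteration produces $\phi_P$ modulo $x^{2d_i}$ in $\wt{\mathcal{O}}(d_i)$ operations over $\kp_P$ is not justified and, as stated, false. Each Newton step requires evaluating $f_i(x,\phi)$ (and $\partial_y f_i(x,\phi)$) in $\kp_P[[x]]/(x^{2d_i})$, and $f_i$ has $\mathcal{O}(\Vol(N_f))$ monomials whose $y$-degrees are controlled only by $n_i := \deg_y f_i$. Bounding this evaluation is precisely the content of the paper's Lemma~9, which gives $\wt{\mathcal{O}}(d_i\Vol(N_f))$ operations in $\kp_P$ per evaluation (not $\wt{\mathcal{O}}(d_i)$), and the lemma itself hinges on the non-obvious estimate $n_i\in\mathcal{O}(\Vol(N_f)^2)$. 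That estimate in turn requires proving that $\Vert\eta_{i+1}\Vert\in\mathcal{O}(\Vol(N_f))$ even when $\rho_{i+1}$ is one of the rays added by the regular refinement $\Sigma\supset\Sigma_f$, which is done by writing $\eta_{i+1}=a\eta_i+b\eta_j$ with $a,b\le 1$ for rays of the original normal fan. None of this appears in your argument; without it, the summation $\sum_i d_i l_i=\mathcal{O}(\Vol(N_f))$ only gets you $\wt{\mathcal{O}}(\Vol(N_f))$ for the lifting step, which would make the announced $\wt{\mathcal{O}}(\Vol(N_f)^2)$ bound a mystery rather than a proven estimate.

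Relatedly, your assertion that the range of exponents $K=\langle m,\eta_{i+1}\rangle$ over $m\in M$ is ``$\mathcal{O}(\Vol(N_f))$'' is unsupported: the paper establishes only $\mathcal{O}(\Vol(N_f)^2)$ for this width (again via $\Vert m\Vert,\Vert\eta_{i+1}\Vert\in\mathcal{O}(\Vol(N_f))$), and therefore computes $\phi_P^K$ by binary exponentiation, i.e.\ $\mathcal{O}(\Vol(N_f))$ monomial evaluations each costing $\mathcal{O}(\log n_i)$ multiplications in $\kp_P[[x]]/(x^{2d_i})$, rather than by ``iterated multiplication.'' If you iterate naively over consecutive powers you pay $\mathcal{O}(n_i)=\mathcal{O}(\Vol(N_f)^2)$ multiplications per $P$ and the total blows up past $\wt{\mathcal{O}}(\Vol(N_f)^2)$; if you only touch the $|M|=\mathcal{O}(\Vol(N_f))$ exponents that actually occur, you still need to say how you jump between them, which again comes back to binary exponentiation and the $n_i$ bound. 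Your final paragraph candidly lists these counting and amortisation issues as ``the main obstacle,'' but that obstacle \emph{is} the proof of the matrix-building bound; deferring it leaves the statement unproved. The last step (Storjohann applied to a $\Card(\mathcal{P})\times|M|$ matrix with $|M|=\mathcal{O}(\Vol(N_f))$) is correct and matches the paper.
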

\vskip2mm
\noindent

\begin{proof}
We divide the proof in three steps. 
\vskip2mm
\noindent
{\it Step 1. Computing the $\phi_P$'s.} We use the classical Newton iteration algorithm \cite{GG:gnus}. In order to estimate the cost in our toric setting, we need the following 
\vskip1mm
\noindent
\begin{lemm}
Let $P\in \mathcal{P}_i$, $\phi\in\kp_P[[x]]$ and let $f_i\in\kp[x,y]$ as defined in $(13)$. For any $k\in\np$, we can evaluate 
$f_i(x,\phi)\in \kp_P[[x]]$ modulo $(x^k)$ with $\wt{\mathcal{O}}(k\Vol(N_f))$ arithmetic operations in $\kp_P$.
\end{lemm}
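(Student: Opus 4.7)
The plan is to evaluate $f_i(x,\phi)\bmod x^k$ by a sparse Horner scheme in the variable $y$, using fast power series multiplication over $\kp_P$. The essential input is that $f_i$ has few nonzero monomials, namely at most $|N_f\cap\zp^2|$.

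Regroup the monomials of $f_i$ by their $y$-exponent: $f_i(x,y)=\sum_{b\in B} c_b(x)\,y^b$ with $c_b\in\kp[x]$ and $B\subset\np$ the finite set of $y$-exponents that appear. Enumerate $B=\{b_1>b_2>\cdots>b_r\}$ in decreasing order and evaluate through the telescoping identity
$$
f_i(x,\phi)\equiv\Bigl(\cdots\bigl(c_{b_1}(x)\,\phi^{b_1-b_2}+c_{b_2}(x)\bigr)\phi^{b_2-b_3}+\cdots\Bigr)\phi^{b_{r-1}-b_r}+c_{b_r}(x)\pmod{x^k}.
$$
Each of the $r-1$ iterations comprises: (i) one fast exponentiation $\phi^{b_i-b_{i+1}}\bmod x^k$ requiring $O(\log\deg_y f_i)$ series multiplications of cost $\wt{\mathcal{O}}(k)$ via Sch\"onhage--Strassen; (ii) one multiplication of the current accumulator by the resulting power, again $\wt{\mathcal{O}}(k)$; (iii) one addition of $c_{b_{i+1}}(x)$ of cost $\mathcal{O}(k)$. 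Summing over all iterations, the total cost is $\wt{\mathcal{O}}(kr)$, the logarithmic factor being absorbed into the soft notation.

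The main obstacle is the sparse bound $r\le |N_f\cap\zp^2|=\mathcal{O}(\Vol(N_f))$. This follows from Pick's theorem applied to the $2$-dimensional lattice polygon $N_f$ (its $2$-dimensionality being guaranteed by $(H_1)$): writing $|N_f\cap\zp^2|=I+B_\partial$ where $I$ and $B_\partial$ count interior and boundary lattice points respectively, and $\Vol(N_f)=I+B_\partial/2-1$, one obtains $B_\partial=2\Vol(N_f)-2I+2\le 2\Vol(N_f)+2$, hence $|N_f\cap\zp^2|=\Vol(N_f)+B_\partial/2+1\le 2\Vol(N_f)+2$. Substituting into the Horner cost bound gives the claimed $\wt{\mathcal{O}}(k\,\Vol(N_f))$ operations in $\kp_P$.
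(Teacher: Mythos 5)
Your sparse Horner scheme is a legitimate alternative to the paper's monomial-by-monomial evaluation, and the Pick's theorem bound $|N_f\cap\zp^2|\le 2\Vol(N_f)+2$ is correct and exactly what is needed to control the number of terms. But there is a genuine gap: you never bound $\deg_y f_i$. Each telescoping exponentiation $\phi^{b_i-b_{i+1}}$ costs $O(\log\deg_y f_i)$ multiplications, and you then claim that ``the logarithmic factor is absorbed into the soft notation.'' That absorption is only valid if $\log\deg_y f_i$ is polylogarithmic in $\Vol(N_f)$ (or in $k$), equivalently if $\deg_y f_i$ is polynomially bounded in $\Vol(N_f)$. This is not automatic: from $(13)$, $\deg_y f_i=\max_{m\in N_f}\langle m,\eta_{i+1}\rangle-\min_{m\in N_f}\langle m,\eta_{i+1}\rangle$, and $\eta_{i+1}$ is a ray of the chosen regular refinement $\Sigma$ of $\Sigma_f$, not of $\Sigma_f$ itself; a careless refinement could make $\|\eta_{i+1}\|$ enormous and hence $\deg_y f_i$ superpolynomial, in which case your total cost $\wt{\mathcal{O}}(kr\log\deg_y f_i)$ would exceed the claimed $\wt{\mathcal{O}}(k\Vol(N_f))$.

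This bound is in fact the core content of the paper's proof. It first shows $\Vert m\Vert\in\mathcal{O}(\Vol(N_f))$ for $m\in N_f$ by $(H_1)$, then that $\Vert\eta\Vert\in\mathcal{O}(\Vol(N_f))$ for the primitive inward normals $\eta$ of the one-dimensional faces of $N_f$, and finally — the nontrivial step — that the inserted rays $\eta_{i+1}$ of the regular refinement satisfy $\Vert\eta_{i+1}\Vert\in\mathcal{O}(\Vol(N_f))$ because $\eta_{i+1}=a\eta_i+b\eta_j$ with $a,b\le 1$, a consequence of $\Sigma$ being regular and refining $\Sigma_f$ (via the Hirzebruch--Jung continued-fraction construction). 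This yields $\deg_y f_i\in\mathcal{O}(\Vol(N_f)^2)$, hence $\log\deg_y f_i\in\mathcal{O}(\log\Vol(N_f))$, and only then may the logarithmic factor be folded into $\wt{\mathcal{O}}$. Your proof needs to supply this estimate; without it the complexity claim is unproved.
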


\begin{proof}
Since $f_i$ is a sum of $\mathcal{O}(\Vol(N_f))$ monomials, 
we can evaluate $f_i(x,\cdot)$ at $\phi$ modulo $(x^k)$ by evaluating each of the involved monomials with $\mathcal{O}(\Vol(N_f)\log(n_i))$ operations in $\kp_P[[x]]/(x^k)$, where $n_i$ is the total degree of $f_i$ in $y$. All what we need to show is that $n_i$ is not ``too big''. By $(13)$, we have
\begin{eqnarray*}
n_i:=\deg_y(f_i)=\max_{m\in N_f} \langle m,\eta_{i+1}\rangle-\min_{m\in N_f} \langle m,\eta_{i+1}\rangle,
\end{eqnarray*}
Let $m\in N_f$. By $(H_1)$, the polytope $\Conv\{(0,0),(1,0),(0,1),m\}$ is contained in $N_f$. Since it has euclidean volume $(m_1+m_2)/2$, we deduce $\Vert m \Vert\in\mathcal{O}(\Vol(N_f))$ for all $m\in N_f$, where $\Vert m \Vert:=|m_1|+|m_2|$. There remains to estimate $\Vert \eta_{i+1}\Vert$. As before, we check easily that $\Vert\eta\Vert\in\mathcal{O}(\Vol(N_f))$ for all inward primitive normal vectors of the \textit{one dimensional} faces of $N_f$. Let $j> i$ be the first index for which the $j^{th}$-exterior face of $N_f$ has dimension one. So we can write $\eta_{i+1}=a\eta_i+b\eta_{j}$ for some positive rational numbers $a$, $b$. We have relations
$$
b\det (\eta_i,\eta_j)=\det(\eta_i,\eta_{i+1})=1\quad {\rm and} \quad a\det(\eta_i,\eta_j)=\det(\eta_{i+1},\eta_j)<\det (\eta_i,\eta_j),
$$ 
last inequality using that $\Sigma$ is regular and refines $\Sigma_f$ (see \cite{Cox:gnus}). So $a, b\le 1$. Since  $\Vert \eta_i \Vert \in\mathcal{O}(\Vol(N_f))$ and $\Vert \eta_j \Vert \in\mathcal{O}(\Vol(N_f))$, it follows that $\Vert\eta_{i+1}\Vert\in\mathcal{O}(\Vol(N_f))$. Since
$
|\langle m,\eta_{i+1}\rangle|\in\mathcal{O}(\Vert m\Vert\Vert \eta_{i+1}\Vert),
$
we deduce $n_i\in\mathcal{O}(\Vol(N_f)^2)$. 

Hence, the cost for the evaluation step is $\mathcal{O}(\Vol(N_f)\log(\Vol(N_f))$ operations in $\kp_P[[x]]/(x^k)$, or $\wt{\mathcal{O}}(k\Vol(N_f))$ operations in $\kp_P$ (Corollary $9.7$, \cite{GG:gnus}). 
\end{proof}
 
By using the fast modular Newton iteration Algorithm $2$, \cite{CL:gnus} and by replacing the given evaluation cost by that induced by Lemma $9$, we deduce that we can compute the series $\phi_P$ with precision $2d_i$ with
$\wt{\mathcal{O}}(d_i\Vol(N_f))$ operations in $\kp_P$. Each operation in $\kp_P$ takes $\wt{\mathcal{O}}(l_P)$ operations in $\kp$ and we have $\sum_{P\in\mathcal{P}_i}l_P=l_i$. Since $2C$ and $D$ have the same Picard class, we deduce
$$
\sum_{i=1}^r 2d_il_i=\sum_{i=1}^r 2d_i\deg(C\cdot D_i) = \deg(C\cdot D) = 2\deg(C\cdot C)=4\Vol(N_f),
$$
the last equality using basic toric intersection theory (see \cite{F:gnus} for instance). It follows that computing all the $\phi_P$'s up to the precision imposed by $D$ has complexity $\wt{\mathcal{O}}(\Vol(N_f)^2)$. 
\vskip3mm
\noindent
{\it Step 2. Building the matrix $A$.}
Let $P\in\mathcal{P}_i$. Computing the series 
$\B^{\langle m,\eta_{i+1}\rangle}(\phi_P)$ with precision $2d_i$
for all  $m\in \interieur(2N_f)\cap\zp^2$ requires at most one inversion in $\kp_P[[x]]/(x^{2d_i})$ and $\mathcal{O}(\Vol(N_f))$ evaluations of monomials in $\kp_P[[x]]/(x^{2d_i})[y]$ of degrees bounded by $n_i=\mathcal{O}(\Vol(f)^2)$ (and possibly  the computation of a primitive of $\phi_P'/\phi_P$ modulo $(x^{2d_i})$). Each operation takes $\wt{\mathcal{O}}(l_Pd_i)$ operations in $\kp$, giving a total number of  $\wt{\mathcal{O}}(\Vol(N_f)l_Pd_i)$ operations in $\kp$. 
Summing up over all $P\in\mathcal{P}$, we obtain a total number of
$\wt{\mathcal{O}}(\Vol(N_f)^2))$ operations in $\kp$ for computing all the $\B^k(\phi_P)$ involved in the definition of $A$. Then building the matrix $A$ has a negligeable cost.
\vskip3mm
\noindent
{\it Step 3. Computing the reduced echelon basis of $V(D)$.} Since $V(D)$ is determined by $\mathcal{O}(\Vol(N_f)$ equations and $\Card(\mathcal{P})$ unknowns, we can compute its reduced echelon basis with  $\mathcal{O}(\Vol(N_f)\Card(\mathcal{P})^{\omega-1})$ operations in $\kp$ (\cite{Sto:gnus}, Theorem $2.10$). 
\end{proof}

\subsection{Factors computation.} We want now to compute the rational factors of $f$. In all of this section, we fix $\gamma$ in the reduced echelon basis of $V(D)$ and we denote by $q$ the corresponding factor $f$. We first compute the Newton polytope of $q$. By a Theorem of Ostrovski \cite{O:gnus}, $N_{q}$ is a Minkovski summand of $N_f$ so that it is enough to compute the integers 
$$
e_i:=-min_{m\in N_{q}} \langle m,\eta_i\rangle,\quad i=0,\ldots,r+1.
$$
Suppose that $\gamma=\sum_{P\in\mathcal{P}}\mu_P\gamma_P$. We define
$$
l_i(\gamma):=\sum_{P\in\mathcal{P}_i} \mu_{P}\deg(P)
$$
for all $i=1,\ldots,r$. We obtain the following
\vskip3mm
\noindent
\begin{prop}
We have $e_0=e_{r+1}=0$ and the integers $e_1,\ldots,e_r$ are the unique solutions of the affine system 
$$
\sum_{i=1}^r e_{i}a_{ij}=l_j(\gamma),\quad j=1,\ldots,r,
$$
with $a_{i,i+1}:=1$, $a_{ii}:=\det(\eta_{i-1},\eta_{i+1})$ and $a_{ij}:=0$ for $j\ne i,i+1$.
\end{prop}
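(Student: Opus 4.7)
The proof splits naturally into two parts: a Minkowski-summand argument pinning down the boundary values $e_0 = e_{r+1} = 0$, and a toric-geometric computation that recovers the remaining $e_j$'s from the edge lengths of $N_q$. For the boundary values I would invoke the theorem of Ostrowski as in the statement: since $q\mid f$, the Newton polytopes satisfy $N_f = N_q + N_{f/q}$ as a Minkowski sum. The support function is additive under Minkowski sum, so for any ray $\eta$ one has $-\min_{m\in N_f}\langle m,\eta\rangle = -\min_{m\in N_q}\langle m,\eta\rangle + \bigl(-\min_{m\in N_{f/q}}\langle m,\eta\rangle\bigr)$. Specializing to $\eta = \eta_0 = e_2$ and $\eta = \eta_{r+1} = e_1$, both summands on the right are nonnegative because the Newton polytopes of polynomials lie in $\np^2$, while the left side vanishes by $(H_1)$. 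Hence $e_0 = e_{r+1} = 0$.

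Next, I would identify $l_j(\gamma)$ with the lattice length of the face $N_q^{(j)}$ in direction $\eta_j$. By Theorem $2$, the reduced-echelon element $\gamma$ coincides with $i^*(C_q)$, and hypothesis $(H_2)$ forces the coordinates $\mu_P$ to lie in $\{0,1\}$ with $\mu_P = 1$ exactly when $P\in\mathcal{P}_j$ divides the $j$-th facet polynomial of $q$. Summing degrees then gives
\[
l_j(\gamma) = \deg\bigl(P_j^{(q)}\bigr) = \Card\bigl(N_q^{(j)}\cap\zp^2\bigr) - 1,
\]
i.e. the lattice length of the face of $N_q$ supported on $\eta_j$.

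It remains to express this lattice length in terms of $(e_{j-1},e_j,e_{j+1})$. The vertex $v_j\in N_q$ at the intersection of the supporting lines $\langle m,\eta_{j-1}\rangle = -e_{j-1}$ and $\langle m,\eta_j\rangle = -e_j$ is uniquely determined by $\det(\eta_{j-1},\eta_j) = 1$, and a short Cramer's-rule computation, combined with the regularity identity $\eta_{j-1} + \eta_{j+1} + D_j^2\eta_j = 0$ with $D_j^2 = -\det(\eta_{j-1},\eta_{j+1})$, yields
\[
v_{j+1} - v_j = \bigl(e_{j-1} + e_{j+1} - e_j\det(\eta_{j-1},\eta_{j+1})\bigr)\,\eta_j^{\perp}
\]
for a primitive lattice vector $\eta_j^{\perp}$ along the edge. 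The scalar in parentheses therefore equals $l_j(\gamma)$. Equivalently, toric intersection theory gives $l_j(\gamma) = C_q\cdot D_j = \sum_i e_i(D_i\cdot D_j)$, which produces the same relation after inserting $D_i\cdot D_j = 1$ for adjacent rays and $D_j^2 = -\det(\eta_{j-1},\eta_{j+1})$.

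Substituting the boundary values $e_0 = e_{r+1} = 0$ from the first step turns these $r$ relations into a square linear system in $(e_1,\ldots,e_r)$ whose matrix has the structure described by $(a_{ij})$ in the statement, and its triangular shape makes the system uniquely solvable by back-substitution. The main potential obstacle is bookkeeping the orientation conventions — in particular the signs of $\det(\eta_{j-1},\eta_{j+1})$ and $D_j^2$ and the orientation of $\eta_j^{\perp}$ — which all depend on the cyclic ordering $\det(\eta_i,\eta_{i+1}) = 1$ fixed in Subsection $3.1$.
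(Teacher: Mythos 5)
Your treatment of the boundary values $e_0=e_{r+1}=0$ (Ostrowski plus $(H_1)$) and your derivation of the linear relations between the $e_i$'s and the $l_j(\gamma)$'s match the paper's argument in spirit. The paper writes the relations at once from
$$
l_j(\gamma)=\deg(C'\cdot D_j)=\deg(\Div_\infty(q)\cdot D_j)=\sum_{i=1}^r e_i\,\deg(D_i\cdot D_j),
$$
and your Cramer-rule computation of $v_{j+1}-v_j$ is simply a more elementary route to the identical relation; this is a genuine but harmless divergence.

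The gap is in the \emph{uniqueness} step. The system you actually produce — by Cramer or by intersection numbers — is \emph{tridiagonal}: the $j$-th equation reads
$$
e_{j-1}+e_{j+1}+e_j\,D_j^2=l_j(\gamma),
$$
so it couples $e_{j-1}$, $e_j$ \emph{and} $e_{j+1}$ (with $e_0=e_{r+1}=0$ at the ends). You then assert that the matrix ``has the structure described by $(a_{ij})$ in the statement'' and has ``triangular shape'', and conclude by back-substitution; but nothing in your own computation yields a triangular matrix, and back-substitution does not apply to a tridiagonal system without a separate argument that the relevant minors are nonzero. (You also do not reconcile the sign $D_j^2=-\det(\eta_{j-1},\eta_{j+1})$ in your relation with the $+\det(\eta_{j-1},\eta_{j+1})$ appearing on the diagonal of the stated $(a_{ij})$.) The paper settles uniqueness differently and correctly: since $D_1,\ldots,D_r$ freely generate $\Pic(X)\cong\zp^r$ and numerical and rational equivalence coincide on the smooth complete toric surface $X$ (so the intersection pairing is perfect), the matrix $\bigl(\deg(D_i\cdot D_j)\bigr)_{1\le i,j\le r}$ is unimodular and the system has a unique integer solution. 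You should replace the ``triangular shape / back-substitution'' claim with this duality argument, or otherwise supply an independent proof that $\det\bigl(\deg(D_i\cdot D_j)\bigr)=\pm1$.
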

\vskip3mm
\noindent
\begin{proof}
Since $N_{q}$ is a Minkowski summand of $N_f$, the hypothesis $(H_1)$ forces $0\in N_{q}$, so that $e_{0}=e_{r+1}=0$, and $e_{i}\ge 0$ otherwise. By Subsection $3.1$, we know that
$$
\Div_{\infty}(q)=e_1D_1+\cdots+e_rD_r
$$
while the component $C'=\Div_0(q)$ of $C$ satisfies
\begin{eqnarray}
\deg(C'\cdot D_j)=\deg(\gamma_{|D_j})=l_j(\gamma), \quad j=1,\ldots,r.
\end{eqnarray}
Now, the Chow group of the smooth affine plane completion $X$ is $\zp$-free generated by the classes of $D_1,\ldots,D_r$ and numerical and rational equivalence coincide on a smooth toric variety \cite{F:gnus}. By duality, it follows that $(14)$ uniquely determines the class of $C'$, that is the class of $\Div_{\infty}(q)$, that is the integers $e_{1},\ldots,e_{r}$. We have equality
$$
\deg(C'\cdot D_j)=\deg(\Div_{\infty}(q)\cdot D_j)=\sum_{i=1}^r e_{i}\deg(D_i\cdot D_j)
$$
and it's well known that $\deg(D_i\cdot D_j)=a_{ij}$ (see \cite{F:gnus} for instance). Proposition $2$ follows. Note that $\det(a_{ij})=\pm 1$.
\end{proof}

\vskip3mm
\noindent

We can now consider $q(t)=\sum_{m\in N_q\cap \zp^2}c_m t^m$ as a vector indexed by the lattice points of $N_q$. For all $m\in N_q$ and all $P\in\mathcal{P}$, we define

\vskip3mm
\noindent
$$
r_{Pm}\in\kp_P[[x]]/(x^{e_i+1}),\qquad r_{Pm}:=x^{\langle m,\eta_i\rangle+e_i}\phi_P^{\langle m,\eta_{i+1}\rangle}\,\, {\rm mod}\, (x^{e_i+1}),
$$
\vskip3mm
\noindent
where $i$ is chosen so that $P\in\mathcal{P}_i$. This definition makes sense since $\phi_P$ is invertible and $\langle m,\eta_i\rangle+e_i\ge 0$ for all $m\in N_q$. We obtain the following 

\vskip3mm
\noindent
\begin{prop} 
Under  normalization $q(0)=1$, the coefficients vector $(c_m)_{m\in N_q\cap \zp^2}$ of $q$
is the unique rational solution of
$$
c_0=1\quad and \quad \sum_{m\in N_q\cap \zp^2} c_m r_{Pm}=0 \in\kp_P[[x]]/(x^{e_i+1})\quad \forall P\in\mathcal{P},\,\,\mu_P\ne 0.
$$
The induced system $(S_{\gamma})$ over $\kp$ contains $\mathcal{O}(\Vol(N_q))$ affine equations.
\end{prop}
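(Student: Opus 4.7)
The plan is to establish three items in turn: (a) the coefficient vector of $q$ is a solution of the system; (b) under the normalization $c_0=1$ it is the unique solution; (c) the total number of $\kp$-equations is $\mathcal{O}(\Vol(N_q))$. Step (b) is the substantive one; the other two are essentially local bookkeeping.

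For (a), I would work in each chart $U_i$ and write $q$ in local coordinates as the polynomial
$$
q_i(x,y)=\sum_{m\in N_q\cap\zp^2}c_m\,x^{\langle m,\eta_i\rangle+e_i}\,y^{\langle m,\eta_{i+1}\rangle+e_{i+1}},
$$
in perfect analogy with $(13)$, so that $C'\cap U_i=\{q_i=0\}$. For each $P\in\mcp_i$ with $\mu_P\ne 0$, the hypothesis $(H_2)$ makes $(0,y_P)$ a smooth point of $C$ whose unique local branch is parametrized by $(x,\phi_P(x))$; this branch lies on $C'$, so $q_i(x,\phi_P(x))\equiv 0$ identically in $\kp_P[[x]]$. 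Factoring out the unit $\phi_P^{e_{i+1}}$ gives the identity $\sum_m c_m r_{Pm}=0$, a fortiori modulo $x^{e_i+1}$.

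For (b), suppose $\tilde q$ is a second solution supported in $N_q\cap\zp^2$ with $\tilde q(0)=1$, and let $\tilde C'\subset X$ be the Zariski closure of $\{\tilde q=0\}$. Reversing the substitution of (a), each equation says that for every geometric root $y_p\in\bar\kp$ of $P$, the series $\tilde q_i(x,\phi_p(x))\in\bar\kp[[x]]$ vanishes to order at least $e_i+1$; equivalently, the intersection multiplicity $(\tilde C'\cdot C')_{(0,y_p)}$ at the smooth point $(0,y_p)$ of $C'$ is at least $e_i+1$. Summing over the support of $\gamma$ produces
$$
\deg(\tilde C'\cdot C')\;\ge\;\sum_{i=1}^r (e_i+1)\,l_i(\gamma)\;=\;\Big(\sum_i e_i\,l_i(\gamma)\Big)+\sum_i l_i(\gamma).
$$
By Proposition $2$ combined with the self-intersection formula $C'\cdot C'=2\Vol(N_q)$ and $[C']=\sum_i e_iD_i$, the first summand equals exactly $2\Vol(N_q)$; the second is $\deg(C'\cdot\partial X)>0$ since the complete curve $C'$ cannot be contained in the affine chart $\cp^2$. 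On the other hand, the inclusion $N_{\tilde q}\subseteq N_q$ and monotonicity of toric intersection numbers give $\deg(\tilde C'\cdot C')\le C'\cdot C'=2\Vol(N_q)$ whenever $\tilde C'$ and $C'$ share no component. The resulting strict inequality forces a common component; irreducibility of $q$ then yields $q\mid\tilde q$, and $N_{\tilde q}\subseteq N_q$ forces $\tilde q=cq$, with $c=1$ by normalization.

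For (c), each congruence in $\kp_P[[x]]/(x^{e_i+1})$ contributes $(e_i+1)\deg(P)$ scalar equations over $\kp$, so the total is $\sum_i(e_i+1)\,l_i(\gamma)=2\Vol(N_q)+\deg(C'\cdot\partial X)$; the second term is a sum of lattice lengths of the exterior edges of $N_q$, which Pick's theorem bounds by $\mathcal{O}(\Vol(N_q))$. The main obstacle will be in step (b), namely in translating the formal congruences in $\kp_P[[x]]$ into honest intersection multiplicities on $X$ (this requires carefully separating the $\kp_P$-valued lift $\phi_P$ from its $\bar\kp$-specializations $\phi_p$ and using that each $(0,y_p)$ is a smooth point of $C'$) and in verifying that the resulting intersection bound strictly exceeds the global Bernstein--Koushnirenko bound $2\Vol(N_q)$, which is exactly what the extra term $\sum_i l_i(\gamma)>0$ provides.
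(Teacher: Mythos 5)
Your overall strategy for part (b) — a Bézout/Bernstein–Kouchnirenko counting argument — is genuinely different from the paper's, which instead proves uniqueness in two lines via the injectivity of the restriction $H^0(X,\mathcal{O}_X(E))\to H^0(X,\mathcal{O}_{E+\partial X}(E))$ (a consequence of $H^0(X,\mathcal{O}_X(-\partial X))=0$), reducing the statement to an equality of Cartier divisors on the nonreduced scheme $E+\partial X$. The paper's route is shorter and does not need irreducibility of $q$; yours is more explicitly geometric and requires irreducibility of $q$ (available here, since $\gamma$ lies in the echelon basis of $V(D)$), but is perfectly sound in spirit.

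There is, however, a concrete error in step (b) as written. You take $\tilde C'$ to be the closure of $\{\tilde q=0\}$, but $\tilde q_i(x,y)=\sum_m c_m x^{\langle m,\eta_i\rangle+e_i}y^{\langle m,\eta_{i+1}\rangle+e_{i+1}}$ is the local equation of the effective divisor $\tilde H:=\Div(\tilde q)+E$, not of $\tilde C'$. When $N_{\tilde q}\subsetneq N_q$, one has $\tilde e_i:=-\min_{N_{\tilde q}}\langle m,\eta_i\rangle<e_i$ for some $i$, and $\tilde q_i$ carries a spurious factor $x^{e_i-\tilde e_i}y^{e_{i+1}-\tilde e_{i+1}}$; hence $\tilde q_i(x,\phi_p(x))\equiv 0\pmod{x^{e_i+1}}$ only gives $(\tilde C'\cdot C')_{(0,y_p)}\ge \tilde e_i+1$, not $\ge e_i+1$. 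Your lower bound $\deg(\tilde C'\cdot C')\ge\sum_i(e_i+1)l_i(\gamma)$ is therefore not justified; simultaneously, the upper bound you invoke is in fact the equality $\deg(\tilde C'\cdot C')=2\,MV(N_{\tilde q},N_q)$, not merely $\le 2\Vol(N_q)$. The cleanest repair is to run the whole count with $\tilde H$ in place of $\tilde C'$: then $(\tilde H\cdot C')_{(0,y_p)}\ge e_i+1$ is correct, and $\deg(\tilde H\cdot C')=\deg(E\cdot E)=2\Vol(N_q)$ exactly since $\tilde H$ and $C'$ are both linearly equivalent to $E$; the local conditions then give $\deg(\tilde H\cdot C')\ge\sum_i(e_i+1)l_i(\gamma)=2\Vol(N_q)+\deg(C'\cdot\partial X)>2\Vol(N_q)$, forcing a common component. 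Since $C'$ is irreducible and not contained in $|\partial X|$, that component is $C'$ itself, so $C'\le\tilde H$, hence $q\mid\tilde q$, and $N_{\tilde q}\subseteq N_q$ with $\tilde q(0)=1$ forces $\tilde q=q$. Parts (a) and (c) are correct and essentially identical to the paper's.
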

\vskip3mm
\noindent

\begin{proof}
Let $h\in\kp[t_1,t_2]$ with Newton polytope contained in $N_q$. So $h$ and $q$ define global sections of the line bundle $\mathcal{O}_X(E)$, where $E:=\sum_{i=1}^r e_iD_i$. In particular $h=\lambda q$ for $\lambda\in\cp$ if and only if the two Cartier divisors $H:=\Div(h)-E$ and  $C':=\Div(q)-E$ are equal. Since the restriction $$H^0(X,\mathcal{O}_X(E))\rightarrow H^0(X,\mathcal{O}_{E+\partial X}(E))$$ is injective and $H$ and $C'$ are rationally equivalent to $E$, we deduce that 
\begin{eqnarray*}
C'=H\,&\iff & C'\cap (E+\partial X)= H\cap (E+\partial X)\\
 &\iff & C'\cap (E+\partial X)\subset H,
\end{eqnarray*}
with intersection and inclusion taken scheme theoretically. Since $N_q\subset N_f$, we have
$E\le \Div_{\infty}(f)$, so that $E+\partial X\le D$ and
$$
C'\cap (E+\partial X)=\gamma\cap (E+\partial X)=\bigcup_{\mu_P\ne 0} \gamma_P\cap (E+\partial X).
$$  
Let $h_i=0$ be the local equation of $H$ in the chart $U_i$. For $P\in\mathcal{P}_i$, we know that $\gamma_P\cap (E+\partial X) =\gamma_P\cap (e_i+1)D_i$ is contained in $U_i$. Hence, we have equivalence
$$
\gamma_P\cap (e_i+1)D_i\subset H \iff h_i(x,\phi_P(x))\equiv 0\quad {\rm mod}\, (x^{e_i+1}).
$$
If $h(t)=\sum_{m\in N_q \cap \zp^2} u_m t^m$, we can chose (following  $(13)$)
\begin{eqnarray*}
h_i(x,y):=\sum_{m\in N_q \cap \zp^2} u_m x^{\langle m,\eta_i\rangle+e_i}y^{\langle m,\eta_{i+1}\rangle+e_{i+1}}.
\end{eqnarray*}
Since  $\phi_P$ is invertible, we obtain  by linearity that $\gamma_P\cap (E+\partial X)\subset H$ is equivalent to that equality $\sum_m u_mr_{Pm}=0$ holds in $\kp_P[[x]]/(x^{e_i+1})$. It follows that $q$ is the unique solution of the announced system of equations. 

A linear equation in $\kp_P[[x]]/(x^{e_i+1})$ being equivalent to a system of $l_P(e_i+1)$ equations over $\kp$, the total number of equations of the induced system $(S_{\gamma})$ over $\kp$ is
\begin{eqnarray*}
1+\sum_{i=1}^r\sum_{P\in\mathcal{P}_i}  (e_i+1)\mu_P l_P&=&1+\sum_{i=1}^r (e_i+1)\deg(C'\cdot D_i) \\
&=& \deg(C'\cdot E)+ \deg(C'\cdot \partial X)+1\in \mathcal{O}(\Vol(N_q)).
\end{eqnarray*}
\end{proof}
\vskip0mm
\noindent
We deduce the following
\vskip2mm
\noindent
\begin{coro}
Suppose given the reduced echelon basis of $V(D)$. We can compute all the irreducible rational factors of $f$ with at most 
$
\mathcal{O}(\Vol(N_f)^{\omega})
$
arithmetic operations in $\kp$.
\end{coro}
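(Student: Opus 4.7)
The plan is to iterate over the vectors $\gamma$ of the reduced echelon basis of $V(D)$ and, for each one, recover the associated irreducible rational factor $q$ of $f$ in two stages, following Propositions $2$ and $3$. First I would solve the $r\times r$ linear system of Proposition $2$ to extract the multiplicities $e_1,\ldots,e_r$; since $a_{ij}=0$ for $j\ne i,i+1$ this system is essentially bidiagonal and can be solved in $O(r)=O(\Vol(N_f))$ operations, yielding the Newton polytope $N_q$. Next I would set up the affine system $(S_\gamma)$ of Proposition $3$ over $\kp$ and solve for the coefficient vector $(c_m)_{m\in N_q\cap\zp^2}$.

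For the setup of $(S_\gamma)$, the required truncated series $r_{Pm}$ modulo $x^{e_i+1}$ can be extracted cheaply from the $\phi_P$'s already available at precision $2d_i$ from Step $1$ of Corollary $1$; note that $e_i\le d_i$ since $N_q\subset N_f$, so the available precision suffices. The bulk of the work per factor is then the resolution of $(S_\gamma)$, which by Proposition $3$ amounts to a rectangular system of $O(\Vol(N_q))$ equations in $O(\Vol(N_q))$ unknowns (the lattice points of $N_q$), solvable in $O(\Vol(N_q)^\omega)$ operations in $\kp$ by the reduced echelon algorithm of \cite{Sto:gnus}.

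The main obstacle will be bounding the sum of these costs over the $s$ irreducible factors of $f$ without paying a naive multiplicative factor of $s$. Here the key is to invoke the theorem of Ostrowski \cite{O:gnus}, already used in the proof of Proposition $2$, which asserts that $N_f=N_{q_1}+\cdots+N_{q_s}$ as a Minkowski sum, and to combine it with the Brunn--Minkowski inequality in $\rp^2$:
\begin{equation*}
\sum_{j=1}^s \Vol(N_{q_j})^{1/2}\,\le\,\Vol(N_f)^{1/2}.
\end{equation*}
Together with the trivial bound $\Vol(N_{q_j})\le \Vol(N_f)$, this yields
\begin{equation*}
\sum_{j=1}^s \Vol(N_{q_j})^\omega \,\le\, \Vol(N_f)^{\omega-1/2}\sum_{j=1}^s \Vol(N_{q_j})^{1/2}\,\le\, \Vol(N_f)^\omega,
\end{equation*}
which delivers the announced total complexity $\mathcal{O}(\Vol(N_f)^\omega)$ for the factors computation step.
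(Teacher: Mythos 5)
Your proposal is correct and follows essentially the same route as the paper: solve Proposition~2's (bidiagonal) system to get $N_q$, observe $e_i\le d_i$ so the $\phi_P$'s are already at sufficient precision, solve $(S_\gamma)$ for $\mathcal{O}(\Vol(N_q)^\omega)$, and then use Ostrowski's Minkowski-sum decomposition to bound the total. The only cosmetic difference is at the last step: the paper sums via the superadditivity $\sum_j\Vol(N_{q_j})\le\Vol(N_f)$ together with $\Vol(N_{q_j})^{\omega-1}\le\Vol(N_f)^{\omega-1}$, while you invoke the (stronger, but equivalent in effect here) two-dimensional Brunn--Minkowski inequality $\sum_j\Vol(N_{q_j})^{1/2}\le\Vol(N_f)^{1/2}$; both deliver the same bound.
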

\vskip2mm
\noindent
\begin{proof}
Let $\gamma$ belongs to the reduced echelon basis of $V(D)$. Once the affine system $(S_{\gamma})$ is built, Proposition $3$ allows us to compute the corresponding factor $q$ of $f$ by solving a system of $\mathcal{O}(\Vol(N_q))$ affine equations over $\kp$ and $\mathcal{O}(\Vol(N_q))$ unknowns. This requires
$
\mathcal{O}(\Vol(N_q)^{\omega})
$
arithmetic operations in $\kp$ (Theorem $2.10$ in \cite{Sto:gnus}). There remains to build the system. Computing $N_q$ using Proposition $2$ has a negligeable cost. Let $P\in\mathcal{P}_i$. We need to compute $\phi_P$ with precision $x^{e_i+1}$. Since $e_i \le d_i$ and $d_i>0$, we have $e_i+1\le 2d_i$ so that $\phi_P$ has already been computed with a sufficient precision. In the same way, computing the involved powers $\phi_P^{\langle m,\eta_{i+1}\rangle}$, $m\in N_q$ from the already computed powers $\phi_P^{\langle m,\eta_{i+1}\rangle}$, $m\in\interieur (2N_f\cap\zp^2)$ has a negligeable cost too. If $f=q_1\cdots q_s$ is the rational factorization of $f$, we have inequality
$$
\Vol(N_{q_1})+\cdots +\Vol(N_{q_s})\le \Vol(N_{q_1}+\cdots +N_{q_s})= \Vol(N_f)
$$
and we finally need at most
$$
\mathcal{O}(\Vol(N_{q_1})^{\omega}+\cdots +\Vol(N_{q_s})^{\omega})\subset \mathcal{O}(\Vol(N_f)^{\omega})
$$
arithmetic operations in $\kp$ for computing the rational factorization of $f$ from the reduced echelon basis. 
\end{proof}

\vskip2mm
\noindent
Let us remark that the system $(S_{\gamma})$ has a particular sparse structure. For instance, it contains the subsystems of type Vandermond 
$$
\sum_{m\in N_q^{(i)}\cap \zp^2}c_m  y_P^{\langle m,\eta_{i+1}\rangle}=0,\quad P\in\mathcal{P}_i,\,\mu_P\ne 0
$$
that determines (up to multiplication by some constant) the $i^{th}$ exterior facet polynomial $\prod_{P\in\mathcal{P}_i} P^{\mu_P}$ of $q$. We might hope that in practice, the resolution of $S_{\gamma}$ is relatively fast.  

\vskip3mm
\noindent

\subsection{A toric factorization algorithm. Proof of Theorem $1$.}

By combining all previous results, we deduce the following
\vskip3mm
\noindent
{\bf Toric Factorization Algorithm (TFA).}

\vskip2mm
\noindent
{\it Input:} $f\in \kp[t_1,t_2]$ satisfying hypothesis $(H_1)$ and $(H_2)$.
\vskip1mm
\noindent
{\it Output:} The irreducible factorization $f=q_1\cdots q_s$ of $f$ over $\kp$.
\vskip3mm
\noindent
{\it Step 0.} Compute a regular fan $\Sigma$ which refines $\Sigma_f$.
\vskip1mm
\noindent
{\it Step 1.} Compute the set $\mathcal{P}$ of the irreducible rational factors of the exterior facet polynomials of $f$.
\vskip1mm
\noindent
{\it Step 2.} For $i=1,\ldots,r$ and $P\in\mathcal{P}_i$, compute the series $\phi_P\in\kp[[x_i]]$ with precision $x_i^{2d_i}$ using  modular Newton iteration.
\vskip1mm
\noindent
{\it Step 3.} Compute the suitable powers of the $\phi_P$'s in order to build the matrix $A$ of Proposition $1$.
\vskip1mm
\noindent
{\it Step 4.} Compute the reduced echelon basis of $V(D)$.
\vskip1mm
\noindent 
{\it Step 5.} Compute the  rational factors of $f$ by using Propositions $2$ and $3$.
\vskip4mm
\noindent
Theorem $1$ follows immediately from the following
\vskip2mm
\noindent
\begin{prop} The algorithm TFA is correct. It requires to factorize the exterior facet polynomials and to perform at most 
$\mathcal{O}(\Vol(N_f)^{\omega})$ 
arithmetic operations in $\kp$.
\end{prop}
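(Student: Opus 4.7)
The plan is to deduce Proposition 4 by assembling the correctness statements and complexity bounds already established for each individual step. Correctness is essentially a bookkeeping exercise: Step 0 produces the smooth toric surface $X$ on which the whole construction is carried out (existence of such a $\Sigma$ is classical, cf.\ \cite{Danilov:gnus}, \cite{F:gnus}); Step 1 gives the indexing set $\mathcal{P}$ of Subsection 3.1; Step 2 produces the local osculating series $\phi_P$ whose truncations determine the divisors $\gamma_P$ that form the canonical basis of $V$; Step 3 produces the matrix $A$ of Proposition 1; by Proposition 1 and Theorem 2, Step 4 then outputs the reduced echelon basis $(\gamma_1,\ldots,\gamma_s)$ of $V(D)$ which, again by Theorem 2, is in bijection with the irreducible rational factors of $f$; finally, by Propositions 2 and 3, Step 5 recovers each factor $q_j$ from its associated basis vector $\gamma_j$ by computing the Newton polytope $N_{q_j}$ and solving the linear system $(S_{\gamma_j})$.

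For the complexity, I would add the costs step by step. Step 0 can be carried out combinatorially from the rays of $\Sigma_f$ by inserting lattice vectors to subdivide each non-regular cone; since $\Sigma_f$ has at most $\mathcal{O}(\Vol(N_f))$ rays and each refinement step is elementary, this has negligible cost. Step 1 is precisely the univariate factorization of the exterior facet polynomials, which is left as an input-cost by assumption (as in the statement of Theorem 1). Steps 2 and 3 together are exactly what Corollary 1 bounds: $\widetilde{\mathcal{O}}(\Vol(N_f)^2)$ operations in $\kp$. Step 4 costs $\mathcal{O}(\Vol(N_f)\Card(\mathcal{P})^{\omega-1})$ by the same corollary, and since $\Card(\mathcal{P})\le \sum_i l_i=\deg(C\cdot\partial X)\in\mathcal{O}(\Vol(N_f))$ by the toric intersection computation recalled in the proof of Corollary 1, this is absorbed into $\mathcal{O}(\Vol(N_f)^{\omega})$. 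Step 5 costs $\mathcal{O}(\Vol(N_f)^{\omega})$ by Corollary 2.

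Summing, every step is either negligible, part of the facet-polynomial factorization input, or bounded by $\mathcal{O}(\Vol(N_f)^{\omega})$, and the last step dominates. This gives the claimed overall cost and thus Theorem 1 is established.

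The only mildly delicate point, and the one I would write most carefully, is verifying that Step 0 and the combinatorial/bookkeeping costs around Step 5 (assembling $N_q$ via Proposition 2, reusing the already-computed truncated series $\phi_P$ at precision $x^{e_i+1}\le x^{2d_i}$, and forming the coefficients $r_{Pm}$) are indeed dominated by the main algebraic costs; everything else is a direct invocation of the corollaries already proved. No new analytic or geometric ingredient is needed.
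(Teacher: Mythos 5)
Your proof is correct and follows essentially the same route as the paper: correctness is obtained by aggregating Theorem 2 and Propositions 1--3, the cost of Steps 2--5 comes from Corollaries 1 and 2, Step 0 is negligible (the paper cites Hirzebruch continued fractions), and univariate factorization is treated as a black box. The only extra detail you supply beyond the paper's terse proof is the explicit bound $\Card(\mathcal{P})\le\sum_i l_i\le 2\Vol(N_f)$ absorbing the Step 4 cost into $\mathcal{O}(\Vol(N_f)^{\omega})$, which is a correct and helpful clarification but not a different approach.
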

\vskip2mm
\noindent
\begin{proof} 
The correctness of the algorithm is a consequence of Theorem $2$ and Propositions  $1$, $2$, $3$. The desingularization of the fan $\Sigma$ can be obtained by computing some Hirzebruch continued fractions (see \cite{Cox:gnus}) and has a negligeable cost.  We consider rational univariate factorization as a black-box of our algorithm. See for instance \cite{GG:gnus}, \cite{Nov:gnus} and \cite{Bel:gnus} for recent advances in that direction. Finally, the cost of steps $2,3,4,5$ follows from Corollaries $1$ and $2$. 
\end{proof}

\vskip2mm
\noindent
\section{Comparison with related results. Improvements}

In Subsection $4.1$, we compare the algorithm TFA with the most related dense algorithms. In Subsection $4.2$, we discuss the relation with the toric algorithm developed in \cite{W:gnus} by the author. In particular, we obtain a sufficient criterion for using a smaller lifting precision.

\subsection{Comparison with dense algorithms}
 
We compare our method with the lifting and recombination scheme proposed by Lecerf  \cite{L:gnus}, \cite{L2:gnus} and Ch\`eze-Lecerf \cite{CL:gnus} for dense polynomials and we discuss some possible improvements for each step of the algorithm TFA.
\vskip2mm
\noindent 
{\it About Step $0$.} Since $C\cap D_i=0$ for all rays $\rho_i\in\Sigma\setminus \Sigma_f$, we need not to compute all the fan $\Sigma$. Namely, we check easily that it's enough to compute the succesive rays $\rho_{i+1}\in\Sigma$ of the rays $\rho_i\in \Sigma_f$. 

\vskip2mm
\noindent 
{\it About Step $1$.} In most cases, the cost of the univariate factorization step dominates  the complexity of the algorithm TFA. It might be interesting to avoid some of the facet factorization by chosing $D$ with support strictly contained in $|\partial X|$. For instance, if $f$ has bidegree $(d_1,d_2)$, then $X=\pp^1\times \pp^1$ and we might hope to recover the decomposition of $C$ from its restriction to $D=(d_1+1)\pp^1$ since the corresponding restriction $H^0(X,\mathcal{O}_X(C))\rightarrow H^0(X,\mathcal{O}_D(C))$ is injective. This turns out to be the case : in \cite{L:gnus}, G. Lecerf factorizes bidegree polynomials by using only one facet factorization with a sharp precision. In general, $D$ has to obey to the vanishing cohomological properties used in the proof of Theorem $2$, which are closely connected with the geometry of $N_f$. 
 
\vskip2mm
\noindent 
{\it About Step $2$.} In the dense absolute case treated in \cite{CL:gnus}, G. Lecerf and G. Ch\`eze compute the analoguous series by introducing the Paterson-Stockmeyer evaluation scheme in the Newton iteration process. We can adapt such a method to our situation by replacing the input polynomial of Algorithm $1$, \cite{CL:gnus} by a polynomial with degree bounded by $\Vol(N_f)$. In such a way, the  complexity $\mathcal{O}(\Vol(N_f)^{2})$ of step $2$ decreases to $\wt{\mathcal{O}}(\Vol(N_f)^{(\omega+1)/2})$.
\vskip2mm
\noindent 
{\it About Step $3$.} In  \cite{L:gnus}, G. Lecerf builds an analoguous linear system by using fast modular euclidean divison rather than by computing the $\phi_P$'s powers. Both approaches give rise to equivalent linear systems (see \cite{BLSSW:gnus}, Section $2.3$), but the  divison method permits to build the underlying matrix faster. We might hope that in the toric case, it is possible too to introduce an equivalent matrix that can be built using modular division.
\vskip2mm
\noindent 
{\it About Step $4$.} In \cite{L:gnus} and \cite{L2:gnus}, the linear system resolution has complexity $\mathcal{O}(d^{\omega+1})$ with $d$ the total degree of $f$. It's easy to check that the sum of the lattice lengths of the exterior facets of $N_f$ is bounded by $d$, with equality if and only if $\Sigma_f$ is regular. It follows that $\Card(\mathcal{P})\le d$. Since $\mathcal{O}(\Vol(N_f))\subset \mathcal{O}(d^2)$, the reduced echelon basis computation is faster using the toric approach (much faster in the most case).

\vskip2mm
\noindent 
{\it About Step $5$.}
Since we recover the factors of $f$ by solving affine systems, step $5$ has a relatively high cost in the algorithm TFA. If $f$ is a dense polynomial, the task is much simpler and we can recover fastly the global factors of $f$ from the local ones by using modular multiplications of (see \cite {L:gnus}), or by using a partial fraction decomposition method (see \cite{CL:gnus}, \cite{Gao:gnus}). This permits a softly $d^3$ complexity for the factors computation, in general much faster than our approach. We  might hope to adapt these methods to the toric case.

\subsection{About the lifting precision}

In \cite{L2:gnus}, G. Lecerf gives an example in the dense case that shows that the precision $D:=2\Div_{\infty}(f)$ is sharp in Theorem $2$.  
On an other hand, the author obtain in \cite{W:gnus} a toric factorization algorithm running with precision $E:=\Div_{\infty}(f)+\partial X$, but with exponential complexity in most cases. We explain here the relation with our algorithm and  we give an explicit sufficient criterion for using the precision $E<D$. 
\vskip2mm
\noindent 

Let $\gamma=\sum_{P\in\mathcal{P}} \mu_P \gamma_P\in V$. We define the rational numbers
$$
l_i(\gamma):=\sum_{P\in\mathcal{P}}\mu_P \deg(P), \quad i=1,\ldots, r
$$
and we introduce the following convex subset of $V$
$$
\Delta_C=\Big\{\gamma\in V,\quad \sum_{i=1}^r \langle e_k,\eta_i \rangle l_i\le \sum_{i=1}^r \langle e_k,\eta_i \rangle l_i(\gamma) \le 0, \quad  k=1,2\Big\},
$$
where $(e_1,e_2)$ is the canonical basis. We have the following 

\vskip2mm
\noindent
\begin{prop}
The finite set $V(E)\cap \{0,1\}^{\mathcal{P}} \cap \Delta_C$ is a system of generators of the vector subspace $V(D)\subset V(E)$.
\end{prop}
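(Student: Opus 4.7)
The plan is to establish the double containment
\[
\{\gamma_1,\ldots,\gamma_s\}\ \subseteq\ V(E)\cap\{0,1\}^{\mathcal{P}}\cap \Delta_C\ \subseteq\ V(D),
\]
from which the proposition follows by Theorem $2$: the middle set lies inside $V(D)$ and already contains the family $\gamma_1,\ldots,\gamma_s$ generating $V(D)$.

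The first containment can be checked term by term. The memberships $\gamma_j\in V(E)$ and $\gamma_j\in\{0,1\}^{\mathcal{P}}$ are immediate: $V(D)\subseteq V(E)$ since $D\ge E$ (any lifting modulo $D$ restricts to a lifting modulo $E$), and $(H_2)$ forces the $\gamma_j$'s to have $\{0,1\}$-coordinates and to be pairwise orthogonal in the basis $(\gamma_P)_{P\in\mathcal{P}}$. The key point is then $\gamma_j\in\Delta_C$, which I plan to derive from the toric principal relation $\sum_{i=0}^{r+1}\langle e_k,\eta_i\rangle D_i=\Div(t^{e_k})$ for $k=1,2$. Intersecting with $C_j$ gives the vanishing $\sum_{i=0}^{r+1}\langle e_k,\eta_i\rangle\deg(C_j\cdot D_i)=0$; using $\eta_0=e_2$, $\eta_{r+1}=e_1$ and the identity $l_i(\gamma_j)=\deg(C_j\cdot D_i)$, one extracts the upper inequality $\sum_{i=1}^r\langle e_k,\eta_i\rangle l_i(\gamma_j)\le 0$ of $\Delta_C$. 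The lower inequality follows from the same computation applied to the effective divisor $C-C_j$, whose boundary degrees are precisely $l_i-l_i(\gamma_j)$.

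For the second containment, let $\gamma\in V_{\zp}(E)\cap\{0,1\}^{\mathcal{P}}\cap\Delta_C$. The $\{0,1\}$-condition together with the pairwise orthogonality of the $\gamma_P$'s should allow one to pick an effective Cartier divisor $C_\gamma\subseteq C$ on $X$ extending $\gamma$ modulo $E$; such a $C_\gamma$ is the zero divisor of some polynomial $q_\gamma\in\kp[t_1,t_2]$ with $N_{q_\gamma}\subseteq N_f$. The inequalities defining $\Delta_C$, rewritten via the same toric intersection calculus as in the first step, become equivalent to the statement that $N_{q_\gamma}$ is a Minkowski summand of $N_f$. The effective osculation theorem of \cite{W:gnus} then forces $q_\gamma$ to divide $f$ in $\kp[t_1,t_2]$. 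Writing $q_\gamma=\prod_{j\in S}q_j$ for the corresponding subset $S\subseteq\{1,\ldots,s\}$ of irreducible rational factors yields $\gamma=\sum_{j\in S}\gamma_j\in V(D)$.

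The hard part will be the second containment, which involves two non-formal steps: extracting a genuine rational factor of $f$ from the formal extension datum $\gamma\in V_{\zp}(E)\cap\{0,1\}^{\mathcal{P}}$, and identifying the convex inequalities defining $\Delta_C$ with the Ostrowski Minkowski summand condition. Both rely on the detailed analysis of the extension problem with the smaller precision $E$ developed by the author in \cite{W:gnus}; once these are in hand, Theorem $2$ combined with the orthogonality of the $\gamma_j$'s concludes the argument.
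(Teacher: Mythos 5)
Your overall structure — the double containment $\{\gamma_1,\ldots,\gamma_s\}\subseteq V(E)\cap\{0,1\}^{\mathcal{P}}\cap\Delta_C\subseteq V(D)$ — is a valid and sound way to organize the proof, and your verification of the first containment (checking $\gamma_j\in V(E)$, $\gamma_j\in\{0,1\}^{\mathcal P}$, and deriving $\gamma_j\in\Delta_C$ from the principal relations $\Div(t^{e_k})=\sum_i\langle e_k,\eta_i\rangle D_i$ applied to $C_j$ and $C-C_j$) is correct and essentially matches the paper's computation for the forward inclusion.

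The second containment has a genuine gap, and it is located precisely where you start. You write that the $\{0,1\}$-condition together with pairwise orthogonality ``should allow one to pick an effective Cartier divisor $C_\gamma\subseteq C$ extending $\gamma$ modulo $E$.'' This is not a formal consequence of those facts, and asserting it here short-circuits the whole problem: if $C_\gamma$ is already an effective subdivisor of the reduced curve $C$ defined over $\kp$, then $C_\gamma$ is automatically a union of rational components of $C$, $q_\gamma$ automatically divides $f$, and the Minkowski-summand and osculation steps that follow are superfluous. Conversely, what the $\{0,1\}$-condition and orthogonality actually give you is only the boundary bound $0\le\gamma\le\gamma_C$, i.e.\ $0\le\deg(C'\cdot D_i)\le\deg(C\cdot D_i)$ for $i=1,\dots,r$; combined with $\gamma\in\Delta_C$ this extends the bound to $i=0,r+1$ and yields that both $C'$ and $C-C'$ are nef (hence globally generated on the toric surface). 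Nothing in this tells you yet that $C'$ can be realized \emph{as a subcurve of} $C$ — that is exactly where the paper invokes the supplementary vanishing-cohomology properties from the proof of Theorem~$2$ of \cite{W:gnus}, which are the real content of the second containment. In short, you correctly flag the hard part, but you place the osculation theorem after a step that already presupposes its conclusion, and you attribute the existence of an effective lift to the wrong hypotheses. The Minkowski-summand reformulation you sketch is fine as a reinterpretation of the nef conditions, but it does not by itself deliver the divisibility $q_\gamma\mid f$; that step still needs the cohomological argument of \cite{W:gnus}, stated for the precision $E$, which your proposal leaves unspecified.
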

\vskip2mm
\noindent
\begin{proof}
Since $V(D)$ admits a basis with coordinates in $\{0,1\}$, it's enough to show that $V(D)\cap\{0,1\}^{\mathcal{P}}=V(E)\cap \{0,1\}^{|\Gamma|} \cap \Delta_C$. Let $\gamma\in V(D)\cap \{0,1\}^{\mathcal{P}}$. So $\gamma$ is restriction to $D$ of a rational component $C'\in \Divisor(X)$ of $C$ (Theorem $2$). In particular, both divisors $C'$ and $C-C'$ are numerically effective, which is equivalent to that
\begin{eqnarray}
0\le \deg(C'\cdot D_i)\le \deg(C\cdot D_i),\quad i=0,\ldots,r+1.
\end{eqnarray}
For $i=1,\ldots,r$, we have equalities 
$\deg(C'\cdot D_i)=l_i(\gamma)$ and $\deg(C\cdot D_i)=l_i$. On an other hand, for any $m\in\zp^2$, we have
$$
\sum_{i=0}^{r+1} \langle m,\eta_i \rangle \deg(C'\cdot D_i)=\sum_{i=0}^{r+1} \langle m,\eta_i \rangle \deg(C\cdot D_i)=0,
$$
since the divisor $\sum_{i=0}^{r+1} \langle m,\eta_i \rangle D_i=\Div(t^m)$ is principal. Letting $m=e_1$ and using that $(\eta_{r+1},\eta_0)=(e_1,e_2)$, we deduce that
$$
\deg(C'\cdot D_0)=-\sum_{i=1}^r \langle e_1,\eta_i \rangle l_i(\gamma),\quad \deg(C\cdot D_{0})=-\sum_{i=1}^r \langle e_1,\eta_i \rangle l_i,
$$
and the same reasonning with $m=e_2$ gives 
$$
\deg(C'\cdot D_{r+1})=-\sum_{i=1}^r \langle e_2,\eta_i \rangle l_i(\gamma),\quad \deg(C\cdot D_{r+1})=-\sum_{i=1}^r \langle e_2,\eta_i \rangle l_i.
$$
Combined with $(15)$, we deduce that $\gamma\in \Delta_C$, giving an inclusion $V(D)\cap\{0,1\}^{\mathcal{P}}\subset V(E)\cap \{0,1\}^{\mathcal{P}} \cap \Delta_C$. 

Let us show the opposite inclusion. If $\gamma\in V(E)\cap \{0,1\}^{\mathcal{P}}\cap \Delta_C$, it lifts to some divisor $C'\in\Divisor(X)$. By hypothesis, we have $0\le \gamma\le \gamma_C$, giving obvious inequalities
$$
0\le \deg(C'\cdot D_i)=l_i(\gamma)\le l_i= \deg(C\cdot D_i)
$$
for all $i=1,\ldots ,r$. Since $\gamma\in \Delta_C$, we deduce from the previous discussion that $(15)$ holds.  Since being nef is equivalent to being globally generated on a toric variety,it follows that both $\mathcal{O}_X(C')$ and $\mathcal{O}_X(C-C')$ are globally generated. By \cite{W:gnus}, proof of Theorem $2$, this gives rise to supplementary vanishing cohomology properties which ensure that $C'$ can be chosen to be an absolute component of $C$. Since $\gamma$ is defined over $\kp$, such a component is rational by Lemma $8$. Thus $\gamma$ is a $\{0,1\}$-linear combination of the $\gamma_j$'s, that is $\gamma\in V(D)\cap \{0,1\}^{\mathcal{P}}$. 
\end{proof}

Proposition $5$ admits the following useful corollary, which is the toric version of \cite{BLSSW:gnus}, Proposition $4$.
\vskip2mm
\noindent
\begin{coro}
If all the exterior facets of $N_f$  have an inward primitive normal vector with negative coordinates, then $V(E)=V(D)$ if and only if each vector of the reduced echelon basis of $V(E)$ lies in $\{0,1\}^{\mathcal{P}}$.
\end{coro}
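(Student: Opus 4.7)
The plan is to derive the corollary directly from Proposition $5$ by verifying that, under the facet hypothesis, the convex constraint $\Delta_C$ becomes automatic on $\{0,1\}$-vectors of $V(E)$. Once this is in place, Proposition $5$ reduces to the statement that $V(E)\cap \{0,1\}^{\mathcal{P}}$ generates $V(D)$, and the biconditional follows almost mechanically from Theorem $2$.

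I would first record the inclusion $V(D)\subset V(E)$ inside the common ambient space $\kp^{\mathcal{P}}$: since $D-E=\sum_{i=1}^{r}(d_i-1)D_i\ge 0$, restriction carries $V(D)$ into $V(E)$ under the natural identification of the bases $(\gamma_P^{(D)})$ and $(\gamma_P^{(E)})$. Next, I translate the facet hypothesis into a sign condition on the rays of $\Sigma$. If every exterior facet of $N_f$ has inward primitive normal lying strictly in the third quadrant, then one can choose $\Sigma$ so that the extreme cones $(\eta_0,\cdot)=(e_2,\cdot)$ and $(\cdot,\eta_{r+1})=(\cdot,e_1)$ are refined starting with the rays $(-1,0)$ and $(0,-1)$ respectively. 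Every non-axial generator $\eta_i$, $i=1,\ldots,r$, is then a positive combination of vectors with non-positive coordinates, which forces $\langle e_k,\eta_i\rangle\le 0$ for $k=1,2$ and all $i$. I expect this sign analysis to be the only nontrivial technical point of the proof, since it must be compatible with the regularity of the refinement.

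The key inclusion $V(E)\cap \{0,1\}^{\mathcal{P}}\subset \Delta_C$ now follows termwise: for $\gamma=\sum_P \mu_P\gamma_P$ with $\mu_P\in\{0,1\}$ one has $0\le l_i(\gamma)\le l_i$, so in both defining inequalities of $\Delta_C$ the sums $\sum_i \langle e_k,\eta_i\rangle l_i(\gamma)$ and $\sum_i \langle e_k,\eta_i\rangle(l_i-l_i(\gamma))$ have all summands $\le 0$. Applying Proposition $5$ to $V(E)\cap \{0,1\}^{\mathcal{P}}\cap\Delta_C=V(E)\cap \{0,1\}^{\mathcal{P}}$ then yields that this set generates $V(D)$ and, in particular, is contained in $V(D)$.

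Finally, for the forward direction of the biconditional, Theorem $2$ identifies the reduced echelon basis of $V(D)=V(E)$ with $(\gamma_1,\ldots,\gamma_s)$, whose coordinates lie in $\{0,1\}$ thanks to the pairwise orthogonality guaranteed by hypothesis $(H_2)$. Conversely, if the reduced echelon basis $(\nu_1,\ldots,\nu_n)$ of $V(E)$ lies in $\{0,1\}^{\mathcal{P}}$, then each $\nu_j$ belongs to $V(D)$ by the previous step, giving $V(E)\subset V(D)$, and equality follows from the first step. Apart from the sign analysis of the $\eta_i$'s, every step is a direct unpacking of Proposition $5$ and Theorem $2$.
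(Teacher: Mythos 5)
Your proof is correct and follows the same strategy as the paper: reduce the nontrivial direction to showing $V(E)\cap\{0,1\}^{\mathcal{P}}\subset\Delta_C$ under the facet hypothesis, invoke Proposition $5$, and use Theorem $2$ for the other direction. The only place where you do more work than necessary is the sign analysis of the rays $\eta_i$: you need not re-choose $\Sigma$ so that $(-1,0)$ and $(0,-1)$ occur as rays. Indeed, any ray $\rho_i\in\Sigma\setminus\Sigma_f$ has $N_f^{(i)}$ equal to a vertex of $N_f$, so $l_i=l_i(\gamma)=0$ and the corresponding terms drop out of the two sums defining $\Delta_C$; the hypothesis then gives $\langle e_k,\eta_i\rangle<0$ directly for the remaining exterior-facet rays, which are the only ones that contribute, and this holds for \emph{any} regular refinement $\Sigma$. (Re-choosing $\Sigma$ is also slightly delicate since $V(E)$ and $V(D)$ are defined relative to the fixed surface $X=X_\Sigma$, so it is cleaner to argue for the given $\Sigma$.)
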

\vskip1mm
\noindent
\begin{proof}
If the reduced echelon basis of $V(E)$ lies in $\{0,1\}^{\mathcal{P}}$, we have $l_i(\gamma)\le l_i$ for all $\gamma$ in that basis. By hypothesis, we have $\langle e_k,\eta_i \rangle\le 0$ for all $i=1,\ldots,r$, $k=1,2$ and it follows that $\gamma\in V(E)\cap \{0,1\}^{\mathcal{P}} \cap \Delta_C$. The equality $V(E)=V(D)$ follows from Proposition $5$. The other implication is trivial. 
\end{proof}

Proposition $5$ gives an efficient way to compute the reduced echelon basis of $V(D)$ from the finite set $V(E)\cap \{0,1\}^{\mathcal{P}}$. Roughly speaking, the underlying algorithm is that developed in \cite{W:gnus}. It has the advantage to use a smaller lifting precision, but in return, it looks for ``good'' partitions of $\{0,1\}^{\mathcal{P}}$ and can have an exponential complexity. We don't know what is the probability for that equality $V(E)=V(D)$ holds.

\section{Conclusion}
We propose a new lifting and recombination algorithm for rational bivariate factorization that takes advantage of the geometry of the Newton polytope. For polynomials that are sparse enough, our complexity is competitive with that of the actual fastest algorithms developed for dense polynomials. We might hope to improve the complexity with a carefull application of the standard  modular algorithms  in the toric setting.
\vskip2mm
\noindent
{\bf Acknowledgments.} We thank Jos\'{e} Ignacio Burgos and Martin Sombra for their careful reading and helpful comments.

\end{document}